


 \documentclass[final,3p,times]{elsarticle}

 \usepackage{epsfig}

\usepackage{amssymb}
 \usepackage{amsthm}
\usepackage{amsmath,amssymb,amsopn,amsfonts,mathrsfs,amsbsy,amscd}

\usepackage{longtable}
\usepackage{caption}
\usepackage{multirow}

 \usepackage{lineno}




\newcommand{\s}{{\mathfrak{sol}}}

\newcommand{\prs}{\langle\;,\;\rangle}
\newcommand{\too}{\longrightarrow}

\newcommand{\nil}{\mathrm{Nil} }
\newcommand{\sol}{\mathrm{Sol} }

\newcommand{\esp}{\quad\mbox{and}\quad}
\newcommand{\ou}{\quad\mbox{or}\quad}

\newcommand{\G}{{\mathfrak{g}}}

\newcommand{\h}{{\mathfrak{h} }}
\newcommand{\n}{{\mathfrak{n} }}
\newcommand{\ad}{{\mathrm{ad}}}
\newcommand{\tr}{{\mathrm{tr}}}

\newcommand{\B}{{\cal B}}

\newcommand{\di}{\displaystyle}

\newcommand{\na}{\nabla}
\newcommand{\wi}{\widetilde}
\newcommand{\al}{\alpha}
\newcommand{\be}{\beta}
\newcommand{\ga}{\gamma}
\newcommand{\Ga}{\Gamma}
\newcommand{\e}{\epsilon}

\newcommand{\la}{\lambda}
\newcommand{\De}{\Delta}

\font\bb=msbm10

\def\Z{\hbox{\bb Z}}
\def\B{\hbox{\bb B}}
\def\R{\hbox{\bb R}}

\newtheorem{theo}{Theorem}[section]
\newtheorem{pr}{Proposition}[section]

\newtheorem{exem}{Example}

\begin{document}

\begin{frontmatter}


 
 
\title{Biharmonic and harmonic homomorphisms between Riemannian three dimensional unimodular Lie groups}

 \author[label1,label2]{ Boubekeur Sihem, Mohamed Boucetta, }
 \address[label1]{Universit\'e Cadi-Ayyad\\
  Facult\'e des sciences et techniques\\
  BP 549 Marrakesh Morocco\\e-mail: m.boucetta@uca.ma
  }
 
 \address[label2]{Ecole normale sup\'erieure de Bousaada\\
 	Route d'Alger, Bousaada 28001
 	Algeria.
 \\e-mail: sihemmath@hotmail.com 
 }
 


\begin{abstract} We classify biharmonic and harmonic homomorphisms $f:(G,g_1)\too(G,g_2)$ where $G$ is  a connected and simply connected  three-dimensional unimodular Lie group and $g_1$ and $g_2$ are  left invariant Riemannian metrics.
\end{abstract}

\begin{keyword}Harmonic homomorphisms \sep biharmonic homomorphisms \sep Riemannian Lie groups 
\MSC 53C30 \sep \MSC 53C43 \sep \MSC 22E15


\end{keyword}

\end{frontmatter}







\section{Introduction}\label{section1}
The theory of biharmonic maps is old and rich and has gained a growing interest in the last decade (see \cite{baird,sario} and others). The theory of harmonic maps into Lie groups, symmetric spaces or
homogeneous spaces has been extensively studied related to the 
integrable systems by many mathematicians (see for examples \cite{dai, uhlenbeck, wood}). In particular, harmonic maps of Riemann surfaces into
compact Lie groups equipped with a bi-invariant Riemannian metric are called principal
chiral models and intensively studied as toy models of gauge theory in mathematical physics \cite{zakr}.
In the papers  \cite{park, park2},    harmonic inner automorphisms of a compact semi-simple Lie group endowed with a left invariant Riemannian metric where studied. In \cite{boucetta}, there is a detailed study  of  biharmonic and harmonic homomorphisms between Riemannian Lie groups.\footnote{A biharmonic homomorphism between Riemannian Lie groups is a homomorphism of Lie groups $\phi:G\too H$ which is also biharmonic where $G$ and $H$ are endowed with left invariant Riemannian metrics.}

In this paper, we aim the classification, up to a conjugation by automorphisms of Lie groups, of harmonic and biharmonic maps $f:(G,g_1)\too (G,g_2)$ where  $G$ is a non abelian connected and simply-connected three dimensional unimodular Lie group,  $f$ is an homomorphism of Lie groups  and $g_1$ and $g_2$ are two left invariant Riemannian metrics. 
There are five non abelian connected and simply-connected three-dimensional unimodular Lie groups:  the nilpotent Lie group $\nil$, the special unitary group $\mathrm{SU}(2)$, the universal covering group $\wi{\mathrm{PSL}}(2,\R)$ of the special linear group, the solvable Lie group $\sol$ and the universal covering group $\wi{\mathrm{E}_0}(2)$ of the connected component of the Euclidean group.
 There are our main results:
\begin{enumerate}\item For $\nil$ and $\mathrm{Sol}$ we show that a homomorphism is biharmonic if and only if it is harmonic and we classify completely all the harmonic homomorphisms (see Theorems \ref{theoH}, \ref{theohsol} and \ref{theosolbi}).
	\item For $\wi{\mathrm{E}_0}(2)$ we classify completely all the harmonic homomorphisms (see Theorem \ref{theohE2}). For this group there are biharmonic homomorphisms which are not harmonic and we give a complete classification of these homomorphisms (see Theorem \ref{theobihE2}). To our knowledge, these are the first examples of biharmonic not harmonic homomorphisms between Riemannian Lie groups.
	\item For $\mathrm{SU}(2)$ and $\wi{\mathrm{PSL}}(2,\R)$, we give a complete classification of  harmonic homomorphisms (see Theorems \ref{theohsu2} and \ref{thesl2}). We show that these groups have biharmonic homomorphisms which are not harmonic and we give the first examples of these homomorphisms. For $\mathrm{SU}(2)$ we recover the results obtained in \cite{park, park2} and we complete them.
	
	\end{enumerate}
This work is based on \cite{boucetta},  on the results of \cite{lee} which gave a complete classification of left invariant Riemannian metrics on three dimensional Lie groups and on the description given in \cite{P} of the automorphisms of $\mathrm{SU}(2)$ and $\mathrm{PSL}(2,\R)$.    We proceed by a direct computation  and Proposition \ref{test} is a useful trick which simplified many computations. Our straightforward computations were performed using the software Maple.

This paper is divided into seven sections. In Section 2, we give the tools needed in our study and we devote a section to each one of the five groups.

\section{Preliminaries}\label{section2}

Let $\phi:(M,g)\too(N,h)$ be a smooth map between two Riemannian manifolds with $m=\dim M$ and $n=\dim N$. We  denote by $\na^M$ and $\na^N$ the Levi-Civita connexions associated respectively to $g$ and $h$ and  by $T^\phi N$ the vector bundle over $M$ pull-back of $TN$ by $\phi$. It is  an Euclidean vector bundle and the tangent map of $\phi$ is a bundle homomorphism $d\phi:TM\too T^\phi N$. Moreover, $T^\phi N$  carries a connexion $\na^\phi$ pull-back of $\na^N$ by $\phi$ and there is  a connexion on the vector bundle  $\mathrm{End}(TM,T^\phi N)$ given by
\[ (\na_X A)(Y)=\na_X^\phi A(Y)-A\left(\na_X^MY \right),\quad X,Y\in\Ga(TM), A\in \Ga\left(\mathrm{End}(TM,T^\phi N)\right).\]
The map $\phi$ is called harmonic if it is a critical point of the energy $E(\phi)=\frac12\int_M|d\phi|^2\nu_g$. The corresponding Euler-Lagrange equation for the energy is given by the vanishing of the tension field  
\begin{equation}\label{eqtension} \tau(\phi)=\tr_{g}\na d\phi=\sum_{i=1}^m(\na_{E_i}d\phi)(E_i),     \end{equation}
where $(E_i)_{i=1}^m$ is a local frame of orthonormal vector fields. Note that $\tau(\phi)\in\Ga(T^\phi N)$.
The map $\phi$ is called biharmonic if it is a critical point of the
bienergy of $\phi$  defined by $E_2(\phi)=\frac12\int_M|\tau(\phi)|^2\nu_g$. The corresponding Euler-Lagrange equation for the bienergy is given by the vanishing of the bitension field 
\begin{equation}\label{eqbitension} \tau_2(\phi)=-\tr_{g}(\na^\phi)^2_{.\;,\;.}\tau(\phi)-
\tr_{g}R^N(\tau(\phi),d\phi(\;.\;))d\phi(\;{\bf.}\;)=-\sum_{i=1}^m\left((\na^\phi)^2_{E_i,E_i}\tau(\phi)   +R^N(\tau(\phi),d\phi(E_i))d\phi(E_i)\right),
\end{equation}
where $(E_i)_{i=1}^m$ is a local frame of orthonormal vector fields, $(\na^\phi)^2_{X,Y}=\na^\phi_X\na^\phi_Y-\na^\phi_{\na_X^MY}$ and $R^N$ is the curvature of $\na^N$ given by
\[R^N(X,Y)=\na_X^N\na_Y^N-\na_Y^N\na_X^N-\na_{[X,Y]}^N. \]

Let $(G,g)$ be a Riemannian Lie group, i.e., a Lie group endowed with a left invariant Riemannian metric. If $\G=T_eG$ is its Lie algebra and $\prs_\G=g(e)$ then there exists a unique bilinear map $A:\G\times\G\too\G$ called the Levi-Civita product associated to $(\G,\prs_\G)$ given by the formula:
\begin{equation}\label{lc}
2\langle A_uv,w\rangle_\G=\langle[u,v]^\G ,w\rangle_\G+\langle[w,u]^\G
,v\rangle_\G+\langle[w,v]^\G ,u\rangle_\G.
\end{equation} $A$ is entirely determined by the following properties:
\begin{enumerate}\item for any $u,v\in\G$, $A_uv-A_vu=[u,v]^\G$,
	\item for any $u,v,w\in\G$, $\langle A_uv,w\rangle_\G+\langle v,A_uw\rangle_\G=0$.
\end{enumerate}
If we denote by $u^\ell$ the left invariant vector field on $G$ associated to $u\in\G$ then the Levi-Civita connection associated to $(G,g)$ satisfies $\na_{u^\ell}v^\ell=\left(A_uv \right)^\ell$. The couple $(\G,\prs_\G)$ defines a vector say $U^\G\in\G$ by
\begin{equation}\label{eq1}
\langle U^\G,v\rangle_\G=\tr(\ad_v),\quad\mbox{for any}\; v\in\G.
\end{equation} One can deduce  easily from \eqref{lc} that, for any orthonormal basis $(e_i)_{i=1}^n$ of $\G$, 
\begin{equation}\label{ug} U^\G=\sum_{i=1}^nA_{e_i}e_i. \end{equation}
Note that $\G$ is  unimodular iff $U^\G=0$.

Let $\phi:(G,g)\too (H,h)$ be a Lie group homomorphism  between two Riemannian Lie groups.  The differential $\xi:\G\too\h$ of $\phi$ at $e$ is a Lie algebra homomorphism. 
There is a left  action of $G$ on $\Ga(T^\phi H)$ given by 
\[ (a.X)(b)= T_{\phi(ab)}L_{\phi(a^{-1})} X(ab),\quad a,b\in G, X\in \Ga(T^\phi H).   \]
A section $X$ of $T^\phi H$ is called left invariant if, for any $a\in G$, $a.X=X$. For any left invariant section $X$ of $T^\phi H$, we have for any $a\in G$,
$ X(a)=(X(e))^\ell(\phi(a)).$ 
Thus  the space of left invariant sections is isomorphic to the Lie algebra $\h$.
Since $\phi$ is a homomorphism of Lie groups and $g$ and $h$ are left invariant, one can see easily that $\tau(\phi)$ and $\tau_2(\phi)$ are left invariant and hence $\phi$ is harmonic (resp. biharmonic) iff $\tau(\phi)(e)=0$ (resp. $\tau_2(\phi)(e)=0$).  
Now, one can see easily that
\begin{equation}\label{tension}\begin{cases}\di\tau(\xi):=\tau(\phi)(e)=
U^\xi-\xi(U^\G),\\ \di\tau_2(\xi):=\tau_2(\phi)(e)=-\sum_{i=1}^n\left(B_{\xi(e_i)}B_{\xi(e_i)}\tau(\xi)
+K^H(\tau(\xi),\xi(e_i))\xi(e_i)\right)+B_{\xi(U^\G)}\tau(\xi),\end{cases}
\end{equation}where $B$ is the Levi-Civita product associated to $(\h,\prs_\h)$,
\begin{equation}\label{uxi}U^\xi=\sum_{i=1}^nB_{\xi(e_i)}{\xi(e_i)},\end{equation}  $(e_i)_{i=1}^n$ is an orthonormal basis of $\G$ and $K^H$ is the curvature of $B$ given by $K^H(u,v)=[B_u,B_v]-B_{[u,v]}$. So we get the following proposition.
\begin{pr}\label{pr1}Let $\phi:G\too H$ be an homomorphism between two Riemannian Lie groups. Then $\phi$ is harmonic (resp. biharmonic) iff $\tau(\xi)=0$ (resp. $\tau_2(\xi)=0$), where $\xi:\G\too\h$ is the differential of $\phi$ at $e$.
	
\end{pr}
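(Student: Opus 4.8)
The plan is to reduce the vanishing of the (bi)tension field on all of $G$ to its vanishing at the identity, using that every structure involved is left invariant, and then to read off the conclusion from the formulas \eqref{eqtension}, \eqref{eqbitension} and \eqref{tension}. First I would record the equivariance that drives the reduction: since $\phi$ is a Lie group homomorphism one has $\phi\circ L_a=L_{\phi(a)}\circ\phi$ for every $a\in G$, and since $g$ and $h$ are left invariant the translations $L_a$ and $L_{\phi(a)}$ are isometries, hence preserve the Levi-Civita connections of $g$ and $h$. It follows that the pull-back connection $\na^\phi$ on $T^\phi H$, the induced connection on $\mathrm{End}(TG,T^\phi H)$, and the curvature $R^H$ are all equivariant for the left $G$-action on sections defined before the statement. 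As $d\phi$ is a left invariant section of $\mathrm{End}(TG,T^\phi H)$ (it equals $\xi$ at $e$ under the identifications above), and $\tau(\phi)$, $\tau_2(\phi)$ are obtained from $d\phi$ by applying operators built only out of these equivariant connections, the sections $\tau(\phi)$ and $\tau_2(\phi)$ of $T^\phi H$ are left invariant.

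Next I would invoke the elementary fact, already noted, that a left invariant section $X$ of $T^\phi H$ satisfies $X(a)=(X(e))^\ell(\phi(a))$ for all $a\in G$, so that $X\equiv0$ if and only if $X(e)=0$. Applying this to $X=\tau(\phi)$ and to $X=\tau_2(\phi)$ shows that $\phi$ is harmonic (resp. biharmonic) if and only if $\tau(\phi)(e)=0$ (resp. $\tau_2(\phi)(e)=0$).

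It then remains to identify these two vectors with $\tau(\xi)$ and $\tau_2(\xi)$. Fixing an orthonormal basis $(e_i)_{i=1}^n$ of $\G$ and using $\na_{u^\ell}v^\ell=(A_uv)^\ell$, $\na^\phi_{u^\ell}(w^\ell\circ\phi)=(B_{\xi(u)}w)^\ell\circ\phi$ and $d\phi(e_i^\ell)=(\xi(e_i))^\ell\circ\phi$, a direct evaluation of \eqref{eqtension} at $e$ gives $\tau(\phi)(e)=\sum_{i=1}^n B_{\xi(e_i)}\xi(e_i)-\xi\bigl(\sum_{i=1}^n A_{e_i}e_i\bigr)=U^\xi-\xi(U^\G)$ by \eqref{ug} and \eqref{uxi}; carrying out the same computation with \eqref{eqbitension} — where the extra summand $B_{\xi(U^\G)}\tau(\xi)$ arises from the $\na_{\na^G_{e_i}e_i}$ part of $(\na^\phi)^2$ — yields the stated formula for $\tau_2(\xi)$, which is exactly \eqref{tension}. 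This proves the proposition. The only delicate point is the equivariance of the pull-back connection asserted in the first step; granting it, everything else is a mechanical translation into the Lie algebras $\G$ and $\h$.
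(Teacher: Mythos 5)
Your proposal is correct and follows essentially the same route as the paper: the paper's argument is precisely the discussion preceding the proposition, namely that left invariance of $\tau(\phi)$ and $\tau_2(\phi)$ (from $\phi\circ L_a=L_{\phi(a)}\circ\phi$ and the left invariance of the metrics) reduces everything to the identity, together with the evaluation \eqref{tension} identifying $\tau(\phi)(e)$ and $\tau_2(\phi)(e)$ with $U^\xi-\xi(U^\G)$ and the stated expression for $\tau_2(\xi)$. You merely make explicit the two steps the paper labels ``one can see easily,'' including the correct origin of the term $B_{\xi(U^\G)}\tau(\xi)$ from the $\na^\phi_{\na^G_{E_i}E_i}$ part of $(\na^\phi)^2$.
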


Thus the study of biharmonic and harmonic homomorphisms between connected and simply-connected Lie groups reduces to the study of their differential so,
through this paper, we consider homomorphisms $\xi:(\G,\prs_1)\too(\G,\prs_2)$ where $\G$ is a Lie algebra and $\prs_1$ and $\prs_2$ are two Euclidean products. We call $\xi$ harmonic (resp. biharmonic) if $\tau(\xi)=0$ (resp. $\tau_2(\xi)=0$). 

The classification of biharmonic and harmonic homomorphisms will be done up to a conjugation. Two homomorphisms between Euclidean Lie algebras $$\xi_1:(\G,\prs_1^1)\too(\G,\prs_2^1)\esp\xi_2:(\G,\prs_1^2)\too(\G,\prs_2^2)$$
are conjugate if there exists two isometric automorphisms $\phi_1:(\G,\prs_1^1)\too(\G,\prs_1^2)$ and $\phi_2:(\G,\prs_2^1)\too(\G,\prs_2^2)$ such that $\xi_2=\phi_2\circ\xi_1\circ\phi_1^{-1}$.

We give now a criteria which will be useful in order to show that an homomorphism is harmonic if and only if it is biharmonic. 

Let $\xi:(\G,\prs_1)\too(\h,\prs_2)$ be an homomorphism. We suppose that $\G$ is unimodular. The following formulas was established in \cite[Proposition 2.4]{boucetta}:
\begin{eqnarray*}\langle \tau(\xi),u\rangle_2&=&\tr(\xi^*\circ\ad_u\circ\xi),\\
	\langle \tau_2(\xi),u\rangle_2&=&\tr(\xi^*\circ(\ad_u+\ad_u^*)\circ\ad_{\tau(\xi)}\circ\xi)-
	\langle[u,\tau(\xi)],\tau(\xi)\rangle_2,
\end{eqnarray*}	where $\xi^*:\h\too\G$ and $\ad_u^*:\h\too\h$ are given by
\[ \langle\xi^*u,v\rangle_1=\langle u,\xi v\rangle_2\esp \langle\ad_u^*x,y\rangle_2=\langle\ad_uy,x\rangle_2,\;x,y,u\in\h,v\in\G. \]
By combining these two formulas, we get
\[ \langle\tau_2(\xi),u\rangle_2=\tr(\xi^*\circ(\ad_u+\ad_u^*)\circ\ad_{\tau(\xi)}\circ\xi)-
\tr(\xi^*\circ\ad_{[u,\tau(\xi)]}\circ\xi). \]
So if $\xi$ is biharmonic then $\tau(\xi)$ is solution of the linear system
\begin{equation} \tr(\xi^*\circ(\ad_u+\ad_u^*)\circ\ad_{X}\circ\xi)-
\tr(\xi^*\circ\ad_{[u,X]}\circ\xi)=0,\;u\in\h. \end{equation}
If $\B=(f_1,\ldots,f_m)$ is a basis of $\h$ this system is equivalent to
\[ M_\xi(\B) X=0, \]where $M_\xi(\B)=(m_{ij})_{1\leq i\leq j\leq m}$ and
\[ m_{ij}=\tr(\xi^*\circ(\ad_{f_i}+\ad_{f_i}^*)\circ\ad_{f_j}\circ\xi)-
\tr(\xi^*\circ\ad_{[f_i,f_j]}\circ\xi). \]
We call $M_\xi(\B)$ the test matrix of $\xi$ in the basis $(f_1,\ldots,f_n)$.
\begin{pr}\label{test} If $\det(M_\xi(\B))\not=0$ then $\xi$ is biharmonic if and only if it is harmonic.
	
\end{pr}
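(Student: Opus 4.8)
The plan is to recognise that, by the formulas recalled just above, the vanishing of the bitension field $\tau_2(\xi)$ is a \emph{homogeneous linear system in the single unknown $\tau(\xi)\in\h$}, and that the matrix of this system in the basis $\B$ is exactly the test matrix $M_\xi(\B)$; invertibility of $M_\xi(\B)$ will then force $\tau(\xi)=0$.

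First I would dispose of the trivial implication: a harmonic $\xi$ is automatically biharmonic, and here no assumption on $M_\xi(\B)$ is needed. Indeed, the expression for $\tau_2(\xi)$ in \eqref{tension} depends linearly on $\tau(\xi)$ (each summand is one of $B_{\xi(e_i)}B_{\xi(e_i)}\tau(\xi)$, $K^H(\tau(\xi),\xi(e_i))\xi(e_i)$ or $B_{\xi(U^\G)}\tau(\xi)$), so $\tau(\xi)=0$ gives $\tau_2(\xi)=0$.

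For the converse, fix the basis $\B=(f_1,\dots,f_m)$ of $\h$ and expand $\tau(\xi)=\sum_{j=1}^m X_j f_j$. Since $\G$ is unimodular I may use the identity obtained from \cite[Proposition 2.4]{boucetta} and recalled above,
\[ \langle\tau_2(\xi),u\rangle_2=\tr(\xi^*\circ(\ad_u+\ad_u^*)\circ\ad_{\tau(\xi)}\circ\xi)-\tr(\xi^*\circ\ad_{[u,\tau(\xi)]}\circ\xi),\qquad u\in\h. \]
Because $Z\mapsto\ad_Z$ and $Z\mapsto[u,Z]$ are linear in $Z$ and the trace is linear, putting $u=f_i$ and substituting $\tau(\xi)=\sum_j X_j f_j$ yields $\langle\tau_2(\xi),f_i\rangle_2=\sum_{j=1}^m m_{ij}X_j=(M_\xi(\B)X)_i$, with $X=(X_1,\dots,X_m)^{\top}$ and $m_{ij}$ the entries defined above. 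Hence $\xi$ is biharmonic if and only if $M_\xi(\B)X=0$. If $\det(M_\xi(\B))\neq0$ this forces $X=0$, i.e.\ $\tau(\xi)=0$, so $\xi$ is harmonic; combined with the easy direction this gives the claimed equivalence.

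The argument is essentially bookkeeping, and I do not anticipate a genuine obstacle; the one point to verify with care is that the coefficient matrix of the system $\{\langle\tau_2(\xi),f_i\rangle_2=0\}_{i=1}^m$ coincides with $M_\xi(\B)$, which is immediate from multilinearity of the bracket and linearity of the trace. It is also worth noting explicitly where the hypotheses enter: unimodularity of $\G$ is needed to apply \cite[Proposition 2.4]{boucetta}, while the invertibility of $M_\xi(\B)$ is used only for the implication biharmonic $\Rightarrow$ harmonic.
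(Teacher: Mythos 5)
Your argument is correct and is essentially the paper's own: the identity from \cite[Proposition 2.4]{boucetta} shows $\langle\tau_2(\xi),f_i\rangle_2=(M_\xi(\B)X)_i$ with $X$ the coordinate vector of $\tau(\xi)$, so invertibility of $M_\xi(\B)$ forces $\tau(\xi)=0$, while the reverse implication follows from the linearity of $\tau_2(\xi)$ in $\tau(\xi)$ visible in \eqref{tension}. Your explicit remarks on where unimodularity and the determinant hypothesis enter match the paper's (largely implicit) reasoning.
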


We end this section by describing the main objects of this study, namely, the 3-dimensional unimodular Lie algebras.

They are five unimodular  simply connected three dimensional unimodular non abelian Lie groups: \begin{enumerate}
	
	\item The nilpotent Lie group $\nil$ known as Heisenberg group whose Lie algebra will be denoted by $\mathfrak{n}$. We have
	\[ \nil=\left\{\left(\begin{matrix} 1&x&z\\0&1&y\\0&0&1\end{matrix}\right),x,y,z\in\R   \right\}
	\esp \mathfrak{n}=\left\{\left(\begin{matrix} 0&x&z\\0&0&y\\0&0&0\end{matrix}\right),x,y,z\in\R   \right\}. \]
	 The Lie algebra $\n$ has a basis $\B_0=(X_1,X_2,X_3)$ where
	 \[ X_1=\left(\begin{matrix} 0&1&0\\0&0&0\\0&0&0\end{matrix}\right), X_2=\left(\begin{matrix} 0&0&0\\0&0&1\\0&0&0\end{matrix}\right)\esp X_3=\left(\begin{matrix} 0&0&1\\0&0&0\\0&0&0\end{matrix}\right) \]and
	  where   the non-vanishing Lie brackets are $[X_1,X_2]=X_3$. 
	\item $\mathrm{SU}(2)=\left\{\left(\begin{matrix} a+bi&-c+di\\c+di&a-bi\end{matrix}\right),a^2+b^2+c^2+d^2=1   \right\}
	\esp \mathfrak{su}(2)=\left\{\left(\begin{matrix} iz&y+ix\\-y+xi&-zi\end{matrix}\right),x,y,z\in\R   \right\}.$ The Lie algebra $\mathfrak{su}(2)$ has a basis $\B_0=(X_1,X_2,X_3)$
	\[ X_1=\frac12\left(\begin{matrix}0&i\\i&0\end{matrix} \right),\; X_2=\frac12\left(\begin{matrix}0&1\\-1&0\end{matrix} \right)\esp X_3=\frac12\left(\begin{matrix}-i&0\\0&i\end{matrix} \right) \]
	and where   the non-vanishing Lie brackets are
	\begin{equation*} \label{eqsu2}[X_1,X_2]=X_3,\;[X_2,X_3]=X_1\esp [X_3,X_1]=X_2. \end{equation*} 
	
	\item The universal covering group $\wi{\mathrm{PSL}}(2,\R)$ of $\mathrm{SL}(2,\R)$ whose Lie algebra  is $\mathrm{sl}(2,\R)$. 
	The Lie algebra $\mathrm{sl}(2,\R)$ has a basis $\B_0=(X_1,X_2,X_3)$ where
	\[ X_1=\frac12\left(\begin{array}{cc} 0&1\\1&0   \end{array} \right),\; 
	X_2=\frac12\left(\begin{array}{cc} 1&0\\0&-1   \end{array} \right)
	\esp X_3=\frac12\left(\begin{array}{cc} 0&1\\-1&0   \end{array} \right)\]and where
	 the non-vanishing Lie brackets are
	\begin{equation*} \label{eqsl}[X_1,X_2]=-X_3,\;[X_2,X_3]=X_1\esp [X_3,X_1]=X_2. \end{equation*} 
	\item The solvable Lie group $\sol=\left\{\left(\begin{matrix} e^x&0&y\\0&e^{-x}&z\\0&0&1\end{matrix}\right),x,y,z\in\R   \right\}$ whose Lie algebra is $\mathfrak{sol}=\left\{\left(\begin{matrix} x&0&y\\0&-x&z\\0&0&0\end{matrix}\right),x,y,z\in\R   \right\}$. The Lie algebra $\mathfrak{sol}$ has a basis $\B_0=(X_1,X_2,X_3)$ where
	\[ 
	X_1=\left(\begin{matrix} 0&0&1\\0&0&0\\0&0&0\end{matrix}\right),\;
	X_2=\left(\begin{matrix} 0&0&0\\0&0&1\\0&0&0\end{matrix}\right)\esp X_3=\left(\begin{matrix} 1&0&0\\0&-1&0\\0&0&0\end{matrix}\right) \]and where the non-vanishing Lie brackets are
	\begin{equation*} \label{eqsol} [X_3,X_1]=X_1\esp [X_3,X_2]=-X_2. \end{equation*}
	\item The universal covering group $\wi{\mathrm{E}_0}(2)$ of the Lie group 
	\[ \mathrm{E}_0(2)=\left\{\left(\begin{matrix} \cos(\theta)&\sin(\theta)&x\\-\sin(\theta)&\cos(\theta)&y\\0&0&1\end{matrix}\right),\theta,x,y\in\R   \right\}. \]Its Lie algebra is
	\[ \mathrm{e}_0(2)=\left\{\left(\begin{matrix} 0&\theta&x\\-\theta&0&y\\0&0&0\end{matrix}\right),\theta,y,z\in\R   \right\}.  \]The Lie algebra $\mathrm{e}_0(2)$ has a basis $\B_0=(X_1,X_2,X_3)$ where
	\[ 
	X_1=\left(\begin{matrix} 0&0&1\\0&0&0\\0&0&0\end{matrix}\right),\;
	X_2=\left(\begin{matrix} 0&0&0\\0&0&1\\0&0&0\end{matrix}\right)\esp X_3=\left(\begin{matrix} 0&-1&0\\1&0&0\\0&0&0\end{matrix}\right) \]
	and where the non-vanishing Lie brackets are
	\begin{equation*} \label{eqe2} [X_3,X_1]=X_2\esp [X_3,X_2]=-X_1. \end{equation*}

\end{enumerate}

In Table \ref{1}, we collect the informations on these Lie algebras we will use in the next sections. For each Lie algebra among the five Lie algebras above, we give the set of its homomorphisms and the equivalence classes  of Riemannian metrics carried out by this Lie algebra. These equivalence classes were determined in \cite[Theorems 3.3-3.7]{lee}. For $\n$, $\s$ and $e_0(2)$ the homomorphisms can be determined easily. For $\mathfrak{su}(2)$ and $\mathrm{sl}(2,\R)$ an homomorphism is necessarily an inner automorphism and these were determined in \cite{P}.

\begin{center}
	\begin{tabular}{|c|l|l|l|}
		\hline
		Lie algebra&Non-vanishing Lie brackets&Homomorphisms&Equivalence classes\\&&& of Metrics\\
		\hline
		$\n$&$[X_1,X_2]=X_3$&$\left( \begin{matrix}
		\al_1&\al_2&0\\\be_1&\be_2&0\\\al_3&\be_3&\al_1\be_2-\al_2\be_1
		\end{matrix} \right)$&$\mathrm{Diag}(\la,\la,1)$, $\la>0$\\
		\hline
	\multirow{2}{*}{$e_0(2)$}&\multirow{2}{*}{$[X_3,X_1]=X_2, [X_3,X_2]=-X_1$}&$\left( \begin{matrix}
		0&0&a\\0&0&b\\0&0&\ga
		\end{matrix} \right)$, $\left( \begin{matrix}
		\al&-\be&a\\\be&\al&b\\0&0&1
		\end{matrix} \right),\;\ga^2\not=1$&$\mathrm{Diag}(1,\mu,\nu)$,\\
		&&$\left( \begin{matrix}
		\al&\be&a\\\be&-\al&b\\0&0&-1
		\end{matrix} \right)$&$0<\mu\leq1,\nu>0$\\
		\hline
		\multirow{2}{*}{$\s$}&\multirow{2}{*}{$\;[X_3,X_1]=X_1, [X_3,X_2]=-X_2$}&$\left( \begin{matrix}
		0&0&a\\0&0&b\\0&0&\ga
		\end{matrix} \right)$, $\left( \begin{matrix}
		\al&0&a\\0&\be&b\\0&0&1
		\end{matrix} \right)$&$\left( \begin{matrix}
		1&1&0\\1&\mu&0\\0&0&\nu
		\end{matrix} \right),\;\nu>0,\mu>1$\\
		&&$\left( \begin{matrix}
		0&\be&a\\\al&0&b\\0&0&-1
		\end{matrix} \right),\ga^2\not=1$&$\mathrm{Diag}(1,1,\nu)$\\
		\hline
		\multirow{2}{*}{$\mathrm{sl}(2,\R)$}&$[X_1,X_2]=-X_3,\;[X_3,X_1]=X_2,$&
		\multirow{2}{*}{$\mathrm{Rot}_{xy}.\mathrm{Boost}_{xz}.\mathrm{Boost}_{yz}$}&$\mathrm{Diag}(\la,\mu,\nu)$, \\
		&$[X_2,X_3]=X_1$&&$0<\la\leq \mu$ and $\nu>0$\\
		\hline
		\multirow{2}{*}{$\mathfrak{su}(2)$}&$[X_1,X_2]=X_3,\;[X_3,X_1]=X_2,$&
		\multirow{2}{*}{$\mathrm{Rot}_{xy}.\mathrm{Rot}_{xz}.\mathrm{Rot}_{yz}$}&$\mathrm{Diag}(\la,\mu,\nu),\;$\\
		&$[X_2,X_3]=X_1$&&$0<\nu\leq \mu\leq \lambda$\\
		\hline
	\end{tabular}\captionof{table}{ \label{1}}\end{center}
\[ \mathrm{Rot}_{xy}=\left(\begin{array}{ccc}\cos(a)&\sin(a)&0\\-\sin(a)& \cos(a)&0\\0&0&1 \end{array}   \right),
\mathrm{Rot}_{xz}=\left(\begin{array}{ccc}\cos(a)&0&\sin(a)\\0&1 &0\\
-\sin(a)&0&\cos(a) \end{array}   \right),\mathrm{Rot}_{yz}=\left(\begin{array}{ccc}1&0&0\\0&\cos(a) &\sin(a)\\
0&-\sin(a)&\cos(a) \end{array}   \right) \]
\[ \mathrm{Boost}_{xz}=\left(\begin{array}{ccc}\cosh(a)&0&\sinh(a)\\0&1 &0\\
\sinh(a)&0&\cosh(a) \end{array}\right),\mathrm{Boost}_{yz}=\left(\begin{array}{ccc}1&0&0\\0&\cosh(a) &\sinh(a)\\
0&\sinh(a)&\cosh(a) \end{array}   \right).    \]

In the following sections, the computation of $\tau(\xi)$ and $\tau_2(\xi)$ are performed by the software Maple and all the direct computations as well.
	
	\section{Harmonic and biharmonic homomorphisms on the 3-dimensional Heisenberg Lie group}\label{section3}
	The following result gives a complete classification of harmonic and biharmonic homomorphisms of $\n$.

	\begin{theo}\label{theoH} An homomorphism of $\n$ is biharmonic if and only if it is harmonic. Moreover, it is harmonic if and only if it is conjugate to
		 $\xi:(\n,\prs_1)\too(\n,\prs_2)$ where
			\[ \xi=\left( \begin{matrix}
			\al_1&\al_2&0\\\be_1&\be_2&0\\0&0&\al_1\be_2-\al_2\be_1
			\end{matrix} \right)\;\ou\;\left( \begin{matrix}
			a\be_3&-a\al_3&0\\b\be_3&-b\al_3&0\\\al_3&\be_3&0
			\end{matrix} \right),(\al_3,\be_3)\neq (0,0) \]and
			$\mathrm{Mat}(\prs_i,\B_0)=\mathrm{Diag}(\la_i,\la_i,1)$ and $\la_i>0$, $i=1,2$.
				\end{theo}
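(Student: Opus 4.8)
The plan is to reduce, via Proposition~\ref{pr1}, to the differential $\xi\colon(\n,\prs_1)\too(\n,\prs_2)$, and then — using the classification of \cite{lee} collected in Table~\ref{1} — to conjugate so that $\mathrm{Mat}(\prs_i,\B_0)=\mathrm{Diag}(\la_i,\la_i,1)$ with $\la_i>0$. Since conjugation by automorphisms carries homomorphisms to homomorphisms, $\xi$ then has matrix $\left(\begin{smallmatrix}\al_1&\al_2&0\\ \be_1&\be_2&0\\ \al_3&\be_3&D\end{smallmatrix}\right)$ in $\B_0$ with $D=\al_1\be_2-\al_2\be_1$. Write $P=\left(\begin{smallmatrix}\al_1&\al_2\\ \be_1&\be_2\end{smallmatrix}\right)$, so $D=\det P$.

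Next I would compute the Levi--Civita product $B$ of $(\n,\prs_2)$ from \eqref{lc}; the only non-zero values are $B_{X_1}X_2=-B_{X_2}X_1=\tfrac12 X_3$, $B_{X_1}X_3=B_{X_3}X_1=-\tfrac1{2\la_2}X_2$ and $B_{X_2}X_3=B_{X_3}X_2=\tfrac1{2\la_2}X_1$. As $\n$ is unimodular, $U^\n=0$, so $\tau(\xi)=U^\xi$ by \eqref{tension}; inserting the orthonormal basis $\big(\tfrac1{\sqrt{\la_1}}X_1,\tfrac1{\sqrt{\la_1}}X_2,X_3\big)$ of $(\n,\prs_1)$ into \eqref{uxi} gives
\[
\tau(\xi)=\frac1{\la_1\la_2}\Big((\al_3\be_1+\be_3\be_2)X_1-(\al_1\al_3+\al_2\be_3)X_2\Big),
\]
so $\tau(\xi)=0$ iff $P\left(\begin{smallmatrix}\al_3\\ \be_3\end{smallmatrix}\right)=0$. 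Reading this linear condition off yields the harmonic classification: either $(\al_3,\be_3)=(0,0)$ with $P$ arbitrary, which is the first normal form; or $(\al_3,\be_3)\neq(0,0)$, which forces $\left(\begin{smallmatrix}\al_3\\ \be_3\end{smallmatrix}\right)\in\ker P$, hence $\det P=0$ and both rows of $P$ proportional to $(\be_3,-\al_3)$ — writing $(\al_1,\al_2)=a(\be_3,-\al_3)$ and $(\be_1,\be_2)=b(\be_3,-\al_3)$ gives the second normal form. Conversely both normal forms satisfy the displayed identity, hence are harmonic, and harmonicity is invariant under the conjugations used, so the list is complete.

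For the equivalence ``biharmonic $\iff$ harmonic'' the forward implication is immediate, since every term of $\tau_2(\xi)$ in \eqref{tension} carries a factor $\tau(\xi)$. For the converse I would \emph{not} invoke Proposition~\ref{test}: because $\ad_{X_3}=0$, the third column of $M_\xi(\B_0)$ vanishes, so $\det M_\xi(\B_0)\equiv0$ and the test is useless here. Instead I pair $\tau_2(\xi)$ with $\tau(\xi)$ using \cite[Proposition~2.4]{boucetta},
\[
\langle\tau_2(\xi),\tau(\xi)\rangle_2=\tr\big(\xi^*\circ(\ad_{\tau(\xi)}+\ad_{\tau(\xi)}^*)\circ\ad_{\tau(\xi)}\circ\xi\big)-\langle[\tau(\xi),\tau(\xi)],\tau(\xi)\rangle_2,
\]
whose last term is $0$. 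Since $\n$ is two-step nilpotent, $\ad_u^2=0$ for every $u\in\n$ (the image of $\ad_u$ lies in the centre $\R X_3$), so $\tr(\xi^*\circ\ad_{\tau(\xi)}^2\circ\xi)=0$ and
\[
\langle\tau_2(\xi),\tau(\xi)\rangle_2=\tr\big((\ad_{\tau(\xi)}\circ\xi)^*\circ(\ad_{\tau(\xi)}\circ\xi)\big)\ge0,
\]
with equality iff $\ad_{\tau(\xi)}\circ\xi=0$. Thus $\tau_2(\xi)=0$ forces $\ad_{\tau(\xi)}\circ\xi=0$; a direct computation identifies this with $P^{T}P\left(\begin{smallmatrix}\al_3\\ \be_3\end{smallmatrix}\right)=0$, and since $\ker(P^{T}P)=\ker P$ over $\R$ this means $P\left(\begin{smallmatrix}\al_3\\ \be_3\end{smallmatrix}\right)=0$, i.e. $\tau(\xi)=0$. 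The main obstacle is thus conceptual rather than computational: one must spot that Proposition~\ref{test} fails for $\n$ (its test matrix is always singular) and replace it by the refined identity above, whose effectiveness rests entirely on $\ad_{\tau(\xi)}^2=0$ together with $\tau(\xi)$ lying in the abelian span of $X_1,X_2$; one should also keep track of the degenerate sub-cases ($\det P=0$, $P=0$, or $a=b=0$) to confirm that the two stated normal forms exhaust all harmonic homomorphisms.
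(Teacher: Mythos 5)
Your proof is correct. For the harmonic classification you follow essentially the same route as the paper: the same reduction via Proposition \ref{pr1} and Table \ref{1} to $\prs_i=\mathrm{Diag}(\la_i,\la_i,1)$ and to the standard matrix form of $\xi$, the same tension field $\tau(\xi)=\frac{1}{\la_1\la_2}\big((\al_3\be_1+\be_3\be_2)X_1-(\al_1\al_3+\al_2\be_3)X_2\big)$, and the two stated normal forms are exactly the two branches of the linear condition $P(\al_3,\be_3)^{T}=0$, which you spell out more explicitly than the paper does. Where you genuinely diverge is the equivalence ``biharmonic $\Longleftrightarrow$ harmonic'': the paper disposes of it in one line by quoting \cite[Theorem 6.5]{boucetta}, whereas you reprove the relevant special case from scratch by pairing $\tau_2(\xi)$ with $\tau(\xi)$ via \cite[Proposition 2.4]{boucetta}, killing $\tr(\xi^*\circ\ad_{\tau(\xi)}^2\circ\xi)$ by two-step nilpotency, recognizing the surviving term as the nonnegative Hilbert--Schmidt norm of $\ad_{\tau(\xi)}\circ\xi$, and then identifying $\ad_{\tau(\xi)}\circ\xi=0$ with $P^{T}P(\al_3,\be_3)^{T}=0$, so that $\ker(P^{T}P)=\ker P$ forces $\tau(\xi)=0$; I checked the algebra and it is right. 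Your side remark that Proposition \ref{test} cannot be used here because $\ad_{X_3}=0$ makes the third column of $M_\xi(\B_0)$ vanish identically is also correct, and explains why some other device is needed for $\n$. The trade-off: the paper's citation is shorter and the quoted theorem covers nilpotent targets in general, while your argument is self-contained, uses only the formulas of Section \ref{section2}, and makes transparent exactly which structural facts (unimodularity, $\ad_u^2=0$, $\tau(\xi)\in\spa(X_1,X_2)$) drive the conclusion.
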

	
	\begin{proof} The first part of the theorem is a consequence of \cite[Theorem 6.5]{boucetta}. On the other hand, according to Table \ref{1}, and homomorphism $\xi:(\n,\prs_1)\too(\n,\prs_2)$ has, up to a conjugation, the form
		\[ \xi=\left( \begin{matrix}
		\al_1&\al_2&0\\\be_1&\be_2&0\\\al_3&\be_3&\al_1\be_2-\al_2\be_1
		\end{matrix} \right)\esp \prs_i=\mathrm{Diag}(\la_i,\la_i,1),\;\la_i>0, i=1,2. \]Then
		\[ \tau(\xi)= \frac{\al_3\be_1+\be_3\be_2}{\lambda_2\lambda_1}X_1-\frac{\al_3\al_1+\be_3\al_2}{\lambda_2\lambda_1}X_2 \]and the second part of the theorem follows.
		\end{proof}
	
	\section{Harmonic and biharmonic homomorphisms on $\wi{E_0}(2)$}\label{section4}
	
	The situation on $e_0(2)$ is different and there exists biharmonic homomorphisms which are not harmonic. The following  two theorems give a complete classification of harmonic and biharmonic homomorphisms on $e_0(2)$.
	
	\begin{theo}\label{theohE2} An homomorphism of $e_0(2)$ is harmonic if and only if it it is conjugate to $\xi:(e_0(2),\prs_1)\too(e_0(2),\prs_2)$ where
	$\mathrm{Mat}(\prs_i,\B_0)=\left( \begin{matrix}
		1&0&0\\0&\mu_i&0\\0&0&\nu_i
	\end{matrix} \right),0<\mu_i\leq1,\nu_i>0$	and either
	\begin{enumerate}\item $\xi=\left( \begin{matrix}
		0&0&a\\0&0&b\\0&0&\ga
		\end{matrix} \right)$ with $\ga^2\not=1$ and $\left(a\not=0,b\not=0,\ga=0,\mu_2=1\right),\; \left((a,b)\not=(0,0),ab=0,\ga=0\right)\ou(a=b=0)$
		\item $\xi=\left( \begin{matrix}
		\al&-\be&a\\\be&\al&b\\0&0&1
		\end{matrix} \right)$ and $(a=b=0,\al=0),(a=b=0,\be=0),(a=b=0,\mu_1=1)\;\mbox{or}\;(a=b=0,\mu_2=1)$,
		
		\item $\xi=\left( \begin{matrix}
		\al&\be&a\\\be&-\al&b\\0&0&-1
		\end{matrix} \right)$ and $(a=b=0,\al=0),(a=b=0,\be=0),(a=b=0,\mu_1=1)\;\mbox{or}\;(a=b=0,\mu_2=1)$.
		
		\end{enumerate}

	\end{theo}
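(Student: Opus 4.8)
The strategy is to run the same computational machinery already deployed for Theorem \ref{theoH}: reduce everything to the differential $\xi$ via Proposition \ref{pr1}, use Table \ref{1} to put $\xi$ and the two metrics into normal form, compute $\tau(\xi)$ using \eqref{tension}, and then set $\tau(\xi)=0$. Since $e_0(2)$ is unimodular we have $U^\G=0$, so $\tau(\xi)=U^\xi=\sum_i B_{\xi(e_i)}\xi(e_i)$, where $(e_i)$ is a $\prs_1$-orthonormal basis of $e_0(2)$ and $B$ is the Levi-Civita product of $(e_0(2),\prs_2)$. From Table \ref{1}, up to conjugation $\prs_i=\mathrm{Diag}(1,\mu_i,\nu_i)$ with $0<\mu_i\le1,\nu_i>0$, and $\xi$ is one of the three listed matrix shapes. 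The bulk of the proof is then three separate case analyses, one for each shape of $\xi$.

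\textbf{Step 1: Set up and compute $\tau(\xi)$ for each case.} First I fix the $\prs_1$-orthonormal basis $f_1=X_1,\ f_2=\mu_1^{-1/2}X_2,\ f_3=\nu_1^{-1/2}X_3$ of $e_0(2)$, and compute the Levi-Civita product $B$ of $(e_0(2),\prs_2)$ from \eqref{lc} using the brackets $[X_3,X_1]=X_2$, $[X_3,X_2]=-X_1$. Then for each of the three parametrized families of $\xi$ I plug into \eqref{uxi} to get $\tau(\xi)=U^\xi$ as an explicit vector in $e_0(2)$ whose components are rational functions of $\al,\be,a,b,\ga,\mu_1,\mu_2,\nu_1,\nu_2$. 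This is exactly the Maple computation the authors refer to; conceptually it is routine but must be done carefully since the third basis vector is $\nu_1^{-1/2}X_3$ and the image $\xi(X_3)$ has a nonzero $X_3$-component in cases (2) and (3).

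\textbf{Step 2: Solve $\tau(\xi)=0$.} Setting each component of $\tau(\xi)$ to zero gives a polynomial system; its solution set, intersected with the constraints $0<\mu_i\le1$, $\nu_i>0$, is precisely the list of conditions in the statement. For case (1), where $\xi(X_1)=\xi(X_2)=0$ and $\xi(X_3)=aX_1+bX_2+\ga X_3$, one expects $\tau(\xi)$ to involve a term forcing either $ab=0$ or $\mu_2=1$ (this is where the hypothesis $\mu_2\le1$ is used to rule out $\mu_2=1$ generically), together with the $\ga=0$ branches; the subcase $a=b=0$ makes $\xi=0$ on the derived part and is trivially harmonic. For cases (2) and (3), the image is a $2$-dimensional (or larger) subalgebra and one expects the vanishing of $\tau(\xi)$ to split into the "rotational part trivial'' subcases ($\al=0$ or $\be=0$) and the "metric degenerate'' subcases ($\mu_1=1$ or $\mu_2=1$), all under $a=b=0$; when $(a,b)\ne(0,0)$ one should check $\tau(\xi)\ne0$.

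\textbf{Step 3: Conjugation invariance and completeness.} Finally I must confirm that the reduction to the normal forms of Table \ref{1} is legitimate up to the notion of conjugacy defined in the excerpt, i.e. that an isometric automorphism of the source and of the target can simultaneously bring $\prs_1,\prs_2$ to diagonal form and $\xi$ to one of the three shapes; this is where \cite{lee} (for the metrics) and the structure of $\mathrm{Aut}(e_0(2))$ (for the homomorphisms, as recorded in Table \ref{1}) are invoked. The main obstacle is purely organizational: keeping the three cases and their many Boolean subcases disjoint and exhaustive, and making sure no spurious solution survives the inequality constraints $0<\mu_i\le1$. There is no conceptual difficulty once Step 1's formulas are in hand; the delicate point is that $\mu_i=1$ is a genuine special value (the metric acquires extra symmetry) and must be tracked separately throughout, which is exactly why it appears explicitly in the theorem's conditions.
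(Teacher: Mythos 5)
Your proposal follows essentially the same route as the paper's proof: reduce to the normal forms of Table \ref{1} (diagonal metrics $\mathrm{Diag}(1,\mu_i,\nu_i)$ and the three matrix shapes of $\xi$), compute $\tau(\xi)$ componentwise from \eqref{tension}--\eqref{uxi} in a $\prs_1$-orthonormal basis, and solve $\tau(\xi)=0$ case by case under the constraints $0<\mu_i\le1$, $\nu_i>0$; this matches the paper's (Maple-assisted) computation, and your anticipated structure of the solution sets (forcing $\ga=0$ or $a=b=0$ in case (1) with the $ab(\mu_2-1)=0$ branch, and $a=b=0$ plus one of $\al=0$, $\be=0$, $\mu_1=1$, $\mu_2=1$ in cases (2)--(3)) agrees with the explicit tension fields the paper obtains.
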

	
	\begin{proof} According to Table \ref{1}, and homomorphism $\xi:(e_0(2),\prs_1)\too(e_0(2),\prs_2)$ has, up to a conjugation, the form $\mathrm{Mat}(\prs_i,\B_0)=\mathrm{Diag}(1,\mu_i,\nu_i), i=1,2, 
		0<\mu_i\leq 1, \nu_i>0$ and
		\[ \xi=\left( \begin{matrix}
		0&0&a\\0&0&b\\0&0&\ga
		\end{matrix} \right),\ga^2\not=1, \xi=\left( \begin{matrix}
		\al&-\be&a\\\be&\al&b\\0&0&1
		\end{matrix} \right) \ou \xi=\left( \begin{matrix}
		\al&\be&a\\\be&-\al&b\\0&0&-1
		\end{matrix} \right). \]
		
	$\bullet$	$\xi=\left( \begin{matrix}
		0&0&a\\0&0&b\\0&0&\ga
		\end{matrix} \right)$ with $\ga^2\not=1$. We have
		\[ \di\tau(\xi)= -{\frac {\gamma\,\mu_{{2}}b}{\nu_{{1}}}}X_1+{
			\frac {\gamma\,a}{\mu_{{2}}\nu_{{1}}}}X_2+{\frac {ba \left( \mu_{{2}}-1
				\right) }{\nu_{{2}}\nu_{{1}}}}X_3\]
		and $\tau(\xi)=0$ if and only if
		\[  \left(a\not=0,b\not=0,\ga=0,\mu_2=1\right),\; \left((a,b)\not=(0,0),ab=0,\ga=0\right)\ou(a=b=0).\]
	
	$\bullet$ $\xi=\left( \begin{matrix}
	\al&-\be&a\\\be&\al&b\\0&0&1
	\end{matrix} \right)$. We have
	\[ \tau(\xi)=-\frac{\mu_2b}{\nu_1}X_1+\frac{ a}{\mu_2\nu_1}X_2+
	\frac{(\mu_2-1)(\al\be\nu_1(\mu_1-1)+ab\mu_1)}{\mu_1\nu_1\nu_2}X_3\]	 
		 and $\tau(\xi)=0$ if and only if
		 \[ (a=b=0,\al=0),(a=b=0,\be=0),(a=b=0,\mu_1=1)\;\mbox{or}\;(a=b=0,\mu_2=1). \]
	$\bullet$ $\xi=\left( \begin{matrix}
	\al&\be&a\\\be&-\al&b\\0&0&-1
	\end{matrix} \right)$. We have
	\[ \tau(\xi)=\frac{\mu_2b}{\nu_1}X_1-\frac{ a}{\mu_2\nu_1}X_2+
	\frac{(\mu_2-1)(\al\be\nu_1(\mu_1-1)+ab\mu_1)}{\mu_1\nu_1\nu_2}X_3\]	 
	and $\tau(\xi)=0$ if and only if
	\[ (a=b=0,\al=0),(a=b=0,\be=0),(a=b=0,\mu_1=1)\;\mbox{or}\;(a=b=0,\mu_2=1). \]

		\end{proof}
	
	\begin{theo} \label{theobihE2} An homomorphism of $e_0(2)$   is biharmonic not harmonic if and only if it is conjugate to $\xi:(e_0(2),\prs_1)\too(e_0(2),\prs_2)$ where $\mathrm{Mat}(\prs_i,\B_0)=\left( \begin{matrix}
		1&0&0\\0&\mu_i&0\\0&0&\nu_i
		\end{matrix} \right),0<\mu_i\leq1,\nu_i>0$ and either:
		\begin{enumerate}
			\item $\xi=\left( \begin{matrix}
				0&0&a\\0&0&b\\0&0&0
			\end{matrix} \right)$ and $\left(a^2=b^2,ab\not=0\right),$
		\item $(\mu_1\not=0,\mu_2\not=1)$, $\xi=\left( \begin{matrix}
		\al&-\be&a\\\be&\al&b\\0&0&1
		\end{matrix} \right)$  and
		 $\left(  a=b=0,\al^2=\be^2,\al\be\not=0    \right)$ or
		\[ \di\left(a=\e b\mu_2\sqrt{\mu_1},b\not=0  ,\al^2=\be^2={\frac {\sqrt{\mu_{{1}}} \left( {\mu_{{2}}}^{2}\nu_{{2}}+{a}^{2}(\mu_2-1)^2
				\right) }{\mu_2\,\nu_{{
						1}} \left( \mu_{{2}}-1 \right) ^{2} \left(1- \mu_{{1}} \right) }},\be=\e\al,\e=\pm1.    \right) \]
		\item  $(\mu_1\not=0,\mu_2\not=1)$,  $\xi=\left( \begin{matrix}
		\al&\be&a\\\be&-\al&b\\0&0&-1
		\end{matrix} \right)$ and
		$\left(  a=b=0,\al^2=\be^2,\al\be\not=0    \right)$ or
		\[ \di\left(a=\e b\mu_2\sqrt{\mu_1},b\not=0  ,\al^2=\be^2={\frac {\sqrt{\mu_{{1}}} \left( {\mu_{{2}}}^{2}\nu_{{2}}+{a}^{2}(\mu_2-1)^2
				\right) }{\mu_2\,\nu_{{
						1}} \left( \mu_{{2}}-1 \right) ^{2} \left(1- \mu_{{1}} \right) }},\be=\e\al,\e=\pm1    \right). \]

		\end{enumerate}
	\end{theo}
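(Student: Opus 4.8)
The proof mirrors that of Theorem \ref{theohE2}: we run through the three families of homomorphisms of $e_0(2)$ listed in Table \ref{1}, with the metrics put in the normal form $\mathrm{Mat}(\prs_i,\B_0)=\mathrm{Diag}(1,\mu_i,\nu_i)$, $0<\mu_i\le1$, $\nu_i>0$, and retain exactly those $\xi$ with $\tau(\xi)\neq0$ and $\tau_2(\xi)=0$. Since $e_0(2)$ is unimodular, $U^\G=0$, so the bitension field \eqref{tension} simplifies; moreover, by the trace identities recalled just before Proposition \ref{test}, expanding $\tau(\xi)$ in the basis $\B_0$ gives $\left(M_\xi(\B_0)\tau(\xi)\right)_i=\langle\tau_2(\xi),X_i\rangle_2$, so that $\tau_2(\xi)=0$ is equivalent to the linear condition $M_\xi(\B_0)\,\tau(\xi)=0$ on the already known vector $\tau(\xi)$. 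By Proposition \ref{test}, this can hold with $\tau(\xi)\neq0$ only where $\det M_\xi(\B_0)=0$, which organizes the search in each family.

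\emph{First family.} For $\xi=\left(\begin{matrix}0&0&a\\0&0&b\\0&0&\ga\end{matrix}\right)$, $\ga^2\neq1$, the proof of Theorem \ref{theohE2} gives $\tau(\xi)=-\frac{\ga\mu_2b}{\nu_1}X_1+\frac{\ga a}{\mu_2\nu_1}X_2+\frac{ab(\mu_2-1)}{\nu_1\nu_2}X_3$. Computing $\tau_2(\xi)$ with Maple and splitting along $\ga=0$ and $\ga\neq0$: if $\ga\neq0$ the $X_1$- and $X_2$-components of $\tau(\xi)$ already force $a=b=0$, whence $\tau(\xi)=0$; so only $\ga=0$ survives, where $\tau(\xi)=\frac{ab(\mu_2-1)}{\nu_1\nu_2}X_3$ and $\tau(\xi)\neq0$ forces $ab\neq0$ and $\mu_2\neq1$. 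A direct evaluation of $\tau_2(\xi)$ under $\ga=0$ then reduces $\tau_2(\xi)=0$ to $a^2=b^2$, which is item (1).

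\emph{Second and third families.} For $\xi=\left(\begin{matrix}\al&-\be&a\\\be&\al&b\\0&0&1\end{matrix}\right)$ (resp. $\left(\begin{matrix}\al&\be&a\\\be&-\al&b\\0&0&-1\end{matrix}\right)$) the proof of Theorem \ref{theohE2} gives $\tau(\xi)=\mp\frac{\mu_2b}{\nu_1}X_1\pm\frac{a}{\mu_2\nu_1}X_2+\frac{(\mu_2-1)(\al\be\nu_1(\mu_1-1)+ab\mu_1)}{\mu_1\nu_1\nu_2}X_3$. Here the computation is heavier: after obtaining $\tau_2(\xi)$ from Maple, I would discuss $\tau_2(\xi)=0$, $\tau(\xi)\neq0$ on the locus $\det M_\xi(\B_0)=0$ in sub-cases. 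If $a=b=0$, then $\tau(\xi)\neq0$ forces $\mu_1\neq1$, $\mu_2\neq1$, $\al\be\neq0$, and $\tau_2(\xi)=0$ collapses to $\al^2=\be^2$. If $ab\neq0$, the vanishing of the first two components of the relevant trace expressions forces the proportionality $a=\e b\mu_2\sqrt{\mu_1}$ with $\e=\pm1$; substituting this into the remaining scalar equation yields $\be=\e\al$ and solves for $\al^2=\be^2$ in the closed form stated in (2), the hypotheses $0<\mu_1<1$, $\mu_2\neq1$ being exactly what makes this value positive and all denominators nonzero. The third family is identical up to the signs already visible in $\tau(\xi)$ and gives (3). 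As in Theorem \ref{theohE2}, the reduction to conjugacy classes is handled by Table \ref{1}, so no further identification of solutions is needed.

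The main difficulty is the case analysis for the last two families: $\tau_2(\xi)=0$ is a polynomial system whose solution set breaks into several branches according to which of $a$, $b$, $\al$, $\be$, $\mu_i-1$ vanish, and one must check that each branch is captured by, and none is redundant with, the conditions (1)--(3), and that the square-root formula for $\al^2=\be^2$ is well defined precisely under the stated hypotheses. Once the case split is fixed, the remaining symbolic manipulations are routine and are carried out with Maple.
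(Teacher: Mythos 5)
Your overall strategy is the same as the paper's (normal forms from Table \ref{1}, Maple computation of $\tau$ and $\tau_2$, the test matrix and Proposition \ref{test}), and your reformulation that, for the unimodular source, $\tau_2(\xi)=0$ is equivalent to $M_\xi(\B_0)\,\tau(\xi)=0$ is correct and even a little cleaner than what the paper invokes explicitly. The gap is in the decisive part of items (2)--(3), which you assert rather than carry out, and the asserted outcome is not what the computation gives. On the locus $\al^2=\be^2$, $\al\neq0$, $(a,b)\neq(0,0)$, the kernel of $M_\xi(\B_0)$ is spanned by $v=a(\mu_2-1)X_1-b(\mu_2-1)\mu_2X_2+\mu_2X_3$, and the condition $\tau(\xi)\in\ker M_\xi(\B_0)$ yields from the $(X_1,X_2)$-minor the relation $a^2=\mu_2^3b^2$, i.e.\ $a=\e b\mu_2\sqrt{\mu_2}$ --- this is exactly what the paper's proof derives --- and not $a=\e b\mu_2\sqrt{\mu_1}$ as you claim; the stated closed form for $\al^2$ then changes accordingly (one gets $\al^2=\mu_1\bigl(\mu_2^2\nu_2+a^2(\mu_2-1)^2\bigr)/\bigl(\mu_2\sqrt{\mu_2}\,\nu_1(\mu_2-1)^2(1-\mu_1)\bigr)$, while the $\sqrt{\mu_1}$ formula is what one would get if the relation really were $a=\e b\mu_2\sqrt{\mu_1}$). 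So your sentence ``the vanishing of the first two components forces $a=\e b\mu_2\sqrt{\mu_1}$'' is unsupported, and executing your own scheme honestly would in fact expose a discrepancy between the theorem's displayed conditions and the proportionality equations; a proof must resolve this, not quote the statement.

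A second, smaller gap is the case analysis on the degenerate locus. Since $\det M_\xi(\B_0)=\mu_2(\mu_2-1)(\mu_1-1)(\al^2-\be^2)/(\mu_1\nu_1^2)$, the set $\det M_\xi(\B_0)=0$ also contains $\mu_2=1$, $\mu_1=1$ and $\al=\be=0$, and the ``only if'' direction needs these disposed of: the paper does this explicitly ($\mu_2=1$ gives $\tau_2(\xi)=-b\nu_1^{-2}X_1+a\nu_1^{-2}X_2$; $\mu_1=1$ gives $A_1=A_2=0$ only for $a=b=0$; $\al=\be=0$ gives $\tau_2(\xi)=0$ only for $a=b=0$), whereas your split ``$a=b=0$ or $ab\neq0$'' does not cover them, nor the case $(a,b)\neq(0,0)$ with $ab=0$, which is ruled out only after $a^2=\mu_2^3b^2$ is established. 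Finally, in the first family the phrase ``the $X_1$- and $X_2$-components of $\tau(\xi)$ already force $a=b=0$'' must be about $\tau_2(\xi)$: as written the step is circular, since vanishing of $\tau(\xi)$ is what you must not assume; the paper's argument for $\ga\neq0$ goes through the quadratic $(\ga^2\nu_2+a^2)\mu_2^2-2a^2\mu_2+a^2=0$ having no admissible root.
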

	
	\begin{proof} As in the proof of Theorem \ref{theohE2}, according to Table \ref{1}, and homomorphism $\xi:(e_0(2),\prs_1)\too(e_0(2),\prs_2)$ has, up to a conjugation, the form
		$\mathrm{Mat}(\prs_i,\B_0)=\mathrm{Diag}(1,\mu_i,\nu_i), i=1,2, 
		0<\mu_i\leq 1, \nu_i>0$ and
		\[ \xi=\left( \begin{matrix}
		0&0&a\\0&0&b\\0&0&\ga
		\end{matrix} \right),\ga^2\not=1, \xi=\left( \begin{matrix}
		\al&-\be&a\\\be&\al&b\\0&0&1
		\end{matrix} \right) \ou \xi=\left( \begin{matrix}
		\al&\be&a\\\be&-\al&b\\0&0&-1
		\end{matrix} \right). \]
		
		$\bullet$	$\xi=\left( \begin{matrix}
		0&0&a\\0&0&b\\0&0&\ga
		\end{matrix} \right)$ with $\ga^2\not=1$. We have
		\[\begin{cases} 
		\di	\tau_2(\xi)=-{\frac {b\gamma \left(  \left( {\gamma}^{2}\nu_
				{{2}}+{a}^{2} \right) {\mu_{{2}}}^{2}-2\,{a}^{2}\mu_{{2}}+{a}^{2}
				\right) }{{\nu_{{1}}}^{2}\nu_{{2}}}}X_1+{\frac {\gamma\,a \left( {
					b}^{2}{\mu_{{2}}}(\mu_2-1)^2+{
					\gamma}^{2}\nu_{{2}} \right) }{{\nu_{{1}}}^{2}{\mu_{{2}}}^{2}\nu_{{2}}
			}}X_2\\\di+{\frac { \left(  \left( {\gamma}^{2}\nu_{{2}}+{a}^{2}-{b}^{2}
			\right) {\mu_{{2}}}^{2}+ \left( {\gamma}^{2}\nu_{{2}}-{a}^{2}+{b}^{2}
			\right) \mu_{{2}}+{\gamma}^{2}\nu_{{2}} \right) b \left( \mu_{{2}}-1
			\right) a}{{\nu_{{1}}}^{2}{\nu_{{2}}}^{2}\mu_{{2}}}}X_3.
	.\end{cases}
	\]If $\ga=0$ then
	\[ \tau_2(\xi)={\frac { \left( a-b \right)  \left( a+b \right)  \left( \mu_{{2}}-1
			\right) ^{2}ba}{{\nu_{{1}}}^{2}{\nu_{{2}}}^{2}}}X_3
	\]and $\xi$ is biharmonic not harmonic if and only if $a^2=b^2$ and $ab\not=0$.
	
	 If $\ga\not=0$ and $b\not=0$ and $\tau_2(\xi)=0$ then
	\[ \left( {\gamma}^{2}\nu_
	{{2}}+{a}^{2} \right) {\mu_{{2}}}^{2}-2\,{a}^{2}\mu_{{2}}+{a}^{2}=0. \]
	The discriminant of this equation on $\mu_2$ is $\De=-4a^2\ga^2\nu_2\leq0$ and this equation has no solution. It is also clear that if $\ga\not=0$ and $a\not=0$ then $\tau_2(\xi)\not=0$. In conclusion $\xi$ is biharmonic not harmonic if and only if 
	\[ \left(\ga=0,a^2=b^2,ab\not=0\right). \]
		
$\bullet$	$\xi=\left( \begin{matrix}
	\al&-\be&a\\\be&\al&b\\0&0&1
	\end{matrix} \right)$. We have
	\[\begin{cases}\di \tau(\xi)=-\frac{\mu_2b}{\nu_1}X_1+\frac{ a}{\mu_2\nu_1}X_2+
	\frac{(\mu_2-1)(\al\be\nu_1(\mu_1-1)+ab\mu_1)}{\mu_1\nu_1\nu_2}X_3,\\
	\di\tau_2(\xi)=A_1X_1+A_2X_2+A_3X_3,\\
	\di A_1=-{\frac {\, \left( b\mu_{{1}} \left( \mu_{{2}}-1 \right) ^{2}{a}
			^{2}+\beta\,\alpha\,\nu_{{1}} \left( \mu_{{2}}-1 \right) ^{2} \left( 
			\mu_{{1}}-1 \right) a+b\mu_{{1}}{\mu_{{2}}}^{2}\nu_{{2}}
			\right) }{\mu_{{1}}{\nu_{{1}}}^{2}\nu_{{2}}}},
	\\
	\di	A_2= {\frac {\, \left(  \left( \alpha\,\nu_{{1}} \left( \mu_{{1}}-1
			\right) \beta+ab\mu_{{1}} \right) b{\mu_{{2}}}^{3}-2\, \left( \alpha
			\,\nu_{{1}} \left( \mu_{{1}}-1 \right) \beta+ab\mu_{{1}} \right) b{\mu
				_{{2}}}^{2}+ \left( \alpha\,\nu_{{1}} \left( \mu_{{1}}-1 \right) \beta
			+ab\mu_{{1}} \right) b\mu_{{2}}+a\mu_{{1}}\nu_{{2}}
			\right) }{{\nu_{{1}}}^{2}{\mu_{{2}}}^{2}\nu_{{2}}\mu_{{1}}}}
	,\\
	\di{\mu_{{1}}}^{2}{\nu_{{1}}}^{2}{\nu_{{2}}}^{2}\mu_{{2}}	A_3= \mu_{{2}}\beta\,{\nu_{{1}}}^{2} \left( \mu_{{1}}-1 \right) ^{2}
	\left( \mu_{{2}}-1 \right) ^{2}{\al}^{3}+\mu_{{2}}\nu_{{1}}ab\mu_{{1}}
	\left( \mu_{{2}}-1 \right) ^{2} \left( \mu_{{1}}-1 \right) {\al}^{2}\\+
	\mu_{{2}}\beta\,\nu_{{1}} \left( \mu_{{1}}-1 \right)  \left( -{\beta}^
	{2}\mu_{{1}}\nu_{{1}}+{a}^{2}\mu_{{1}}-{b}^{2}\mu_{{1}}+{\beta}^{2}\nu
	_{{1}} \right)  \left( \mu_{{2}}-1 \right) ^{2}\al+ab\mu_{{1}} \left( -{
		\beta}^{2}\mu_{{1}}{\mu_{{2}}}^{2}\nu_{{1}}+\mu_{{1}}{\mu_
		{{2}}}^{2}\nu_{{2}}+{a}^{2}\mu_{{1}}{\mu_{{2}}}^{2}\right.\\\left.-{b}^{2}\mu_{{1}}{
		\mu_{{2}}}^{2}+{\beta}^{2}\mu_{{1}}\mu_{{2}}\nu_{{1}}+{\beta}^{2}{\mu_
		{{2}}}^{2}\nu_{{1}}+\mu_{{1}}\mu_{{2}}\nu_{{2}}-{a}^{2}\mu
	_{{1}}\mu_{{2}}+{b}^{2}\mu_{{1}}\mu_{{2}}-{\beta}^{2}\mu_{{2}}\nu_{{1}
	}+\mu_{{1}}\nu_{{2}} \right)  \left( \mu_{{2}}-1 \right) 		
	\end{cases} \]	and the test matrix is given by
	\[ M_\xi(\B_0)=\left( \begin {array}{ccc} {\frac {\mu_{{2}}}{\nu_{{1}}}}
	&0&-{\frac {a\, \left( \mu_{{2}}-1 \right) }{\nu_{{1}}}}
	\\ \noalign{\medskip}0&{\frac {1}{\nu_{{1}}}}&{\frac {
			\,b \left( \mu_{{2}}-1 \right) }{\nu_{{1}}}}
	\\ \noalign{\medskip}-{\frac {\,\mu_{{2}}a}{\nu_{{1}}}}&-{\frac 
		{\,b}{\nu_{{1}}}}&{\frac { \left( \mu_{{2}}-1 \right)  \left( 
			\left(  \left( {\alpha}^{2}-{\beta}^{2} \right) \nu_{{1}}+{a}^{2}-{b}
			^{2} \right) \mu_{{1}}+\nu_{{1}} \left( -{\alpha}^{2}+{\beta}^{2}
			\right)  \right) }{\mu_{{1}}\nu_{{1}}}}\end {array} \right)\esp \det(M_\xi(\B))= {\frac {\mu_{{2}} \left( \mu_{{2}}-1 \right)  \left( 
			\alpha^2-\beta^2 \right)    \left( \mu_{{1}}-1
			\right) }{{\nu_{{1}}}^{2}\mu_{{1}}}}.	
	 \]Note first that if $\mu_2=1$ then $\tau_2(\xi)= -{\frac {b}{{\nu_{{1}}}^{2}}}X_1+
	 {\frac {a}{{\nu_{{1}}}^{2}}}X_2$ and $\xi$ is biharmonic if and only if it is harmonic. If $\mu_1=1$ then
	 \[ A_1=-{\frac { \left( b \left( \mu_{{2}}-1 \right) ^{2}{a}^{2}+
	 		b{\mu_{{2}}}^{2}\nu_{{2}} \right) }{{\nu_{{1}}}^{2}\nu_{{2}}}}
	 \esp A_2={\frac { \left( {b}^{2}a{\mu_{{2}}}^{3}-2\,{b}^{2}a{\mu_{{2}}}^{2}+{b}
	 		^{2}a\mu_{{2}}+a\nu_{{2}} \right) }{{\nu_{{1}}}^{2}{
	 			\mu_{{2}}}^{2}\nu_{{2}}}}
	  \]and one can see easily $A_1=A_2=0$ if and only if $a=b=0$ and hence  $\xi$ is biharmonic if and only if $\xi$ is biharmonic.
	 
	 We suppose now that $\mu_1<1$ and $\mu_2<1$. So $\det(M_\xi(\B_0))=0$ if and only if $\al^2=\be^2$. According to Proposition \ref{test}, if $\al^2\not=\be^2$ then $\xi$ is biharmonic if and only if it is harmonic. We have also that if $\al=\be=0$ then $\tau_2(\xi)=0$ if and only if $a=b=0$
	 
	 Suppose that $\al^2=\be^2$ and $\al\not=0$. If $a=b=0$ then $\tau_2(\xi)=0$ and $\xi$ is biharmonic not harmonic. Suppose $(a,b)\not=0$. Then the rank of $M_\xi(B_0)$ is equal to 2 and its kernel has dimension one and
	 $$v=a \left( \mu_{{2}}-1 \right) X_1-b \left( 
	 \mu_{{2}}-1 \right) \mu_{{2}}X_2+\,\mu_{{2}}X_3$$ is a generator of the kernel of $M_\xi(\B_0)$. But if $\xi$ is biharmonic then $\tau(\xi)$ is in the kernel of $M_\xi(\B_0)$ and hence it is a multiple of $v$. Recall that
	 \[\tau(\xi)=-\frac{\mu_2b}{\nu_1}X_1+\frac{ a}{\mu_2\nu_1}X_2+
	 \frac{(\mu_2-1)(\e\al^2\nu_1(\mu_1-1)+ab\mu_1)}{\mu_1\nu_1\nu_2}X_3\esp\e=\pm1.  \]But $(v,\tau(\xi))$ are linearly dependent if and only if
	 \[ \begin{cases}\di{\frac { \left( \mu_{{2}}-1 \right) \nu_{{2}}\mu_{{1}}\, \left( 
	 		-{b}^{2}{\mu_{{2}}}^{3}+{a}^{2} \right) }{\mu_{{2}}}}=0,\\
	 -b \left( \mu_{{2}}-1 \right) ^{2}\mu_{{2}}\epsilon\, \left( \mu_{{1}}
	 -1 \right) \nu_{{1}}{\alpha}^{2}-a\mu_{{1}} \left( {b}^{2}{\mu_{{2}}}^
	 {3}-2\,{b}^{2}{\mu_{{2}}}^{2}+{b}^{2}\mu_{{2}}+\nu_{{2}} \right) 
	 =0,\\
	 b\mu_{{1}} \left( \mu_{{2}}-1 \right) ^{2}{a}^{2}+{\alpha}^{2}\epsilon
	 \,\nu_{{1}} \left( \mu_{{2}}-1 \right) ^{2} \left( \mu_{{1}}-1
	 \right) a+b\mu_{{1}}{\mu_{{2}}}^{2}\nu_{{2}}
	 =0.
	  \end{cases} \]Since $(a,b)\not=(0,0)$, this is equivalent to
	  \[ \begin{cases}\di a^2={b}^{2}{\mu_{{2}}}^{3},\\
	  \di \alpha^2=-{\frac {\mu_{{1}}a \left( {b}^{2}{\mu_{{2}}}(\mu_2-1)^2+\nu_{{2}} \right) }{b \left( \mu_{{2
	  			}}-1 \right) ^{2}\mu_{{2}}\epsilon\, \left( \mu_{{1}}-1 \right) \nu_{{
	  			1}}}}=-{\frac {\mu_{{1}}b \left( {\mu_{{2}}}^{2}\nu_{{2}}+{a}^{2}(\mu_2-1)^2
	  	 \right) }{\epsilon\,\nu_{{
	  			1}} \left( \mu_{{2}}-1 \right) ^{2} \left( \mu_{{1}}-1 \right) a}}
\end{cases}	  \]and this is equivalent to
	\[ \begin{cases}\di a^2={b}^{2}{\mu_{{2}}}^{3},\\
	\di \alpha^2=-{\frac {\mu_{{1}}b \left( {\mu_{{2}}}^{2}\nu_{{2}}+{a}^{2}(\mu_2-1)^2
		\right) }{\epsilon\,\nu_{{
				1}} \left( \mu_{{2}}-1 \right) ^{2} \left( \mu_{{1}}-1 \right) a}}.
\end{cases}	  \] So $a=\e b\mu_2\sqrt{\mu_2}$ and we get the desired result.

The case of $\xi=\left( \begin{matrix}
\al&\be&a\\\be&-\al&b\\0&0&-1
\end{matrix} \right)$ can be treated identically.
			\end{proof}

			\section{Harmonic and biharmonic homomorphisms on $\mathrm{Sol}$}\label{section5}
			\begin{theo}\label{theohsol} An homomorphism of $\s$ is harmonic if and only if it is conjugate to $\xi:(\s,\prs_1)\too(\s,\prs_2)$ where:
				
			\begin{enumerate}	
				
			\item  $\prs_i=\mathrm{Diag}(1,1,\nu_i)$, $i=1,2$ and $\nu_i>0$ and either
			
			  $$[\xi=\xi_1,\;(a=b=0)\ou (\ga=0,a^2=b^2)],
			 [\xi=\xi_2,\;(a=b=0,\al^2=\be^2)]\ou 
			[\xi=\xi_3,\;(a=b=0,\al^2=\be^2)].$$

	\item  $\prs_1=\left( \begin{matrix}
	1&0&0\\0&1&0\\0&0&\nu_1
	\end{matrix} \right)$, $\prs_2=\left( \begin{matrix}
	1&1&0\\1&\mu_2&0\\0&0&\nu_2
	\end{matrix} \right)$ and $\nu_i>0$, $\mu_2>1$ and either
	\begin{eqnarray*}&&\left[\xi=\xi_1,(a=b=0)\ou \left(\ga=0,\mu_2=\frac{a^2}{b^2}\right)\right],
		 \left[\xi=\xi_2,(a=b=\al=\be=0)\ou \left(a=b=0,\mu_2=\frac{\al^2}{\be^2}\right)\right]\ou\\
		&&\left[\xi=\xi_3,(a=b=\al=\be=0)\ou \left(a=b=0,\mu_2=\frac{\al^2}{\be^2}\right)\right].\end{eqnarray*}

	\item  $\prs_1=\left( \begin{matrix}
	1&1&0\\1&\mu_1&0\\0&0&\nu_1
	\end{matrix} \right)$, $\prs_2=\left( \begin{matrix}
	1&0&0\\0&1&0\\0&0&\nu_2
	\end{matrix} \right)$ and $\nu_i>0$, $\mu_1>1$, 
	
	\begin{eqnarray*}&& \left[\xi=\xi_1,(a=b=0)\ou (\ga=0,a^2=b^2)\right],
	 \left[\xi=\xi_2,(a=b=\al=\be=0)\ou \left(a=b=0,\mu_1=\frac{\be^2}{\al^2}\right)\right],\ou\\ 
	&&	\left[\xi=\xi_3,(a=b=\al=\be=0)\ou \left(a=b=0,\mu_1=\frac{\be^2}{\al^2}\right)\right].\end{eqnarray*}

		\item  $\prs_1=\left( \begin{matrix}
		1&1&0\\1&\mu_1&0\\0&0&\nu_1
		\end{matrix} \right)$, $\prs_2=\left( \begin{matrix}
		1&1&0\\1&\mu_2&0\\0&0&\nu_2
		\end{matrix} \right)$ and $\nu_i>0$, $\mu_i>1$ and either
		
		\begin{eqnarray*}&&  \left[\xi=\xi_1,(a=b=0)\ou \left(\ga=0,\mu_2=\frac{a^2}{b^2}\right)\right],\\
		 &&\left[\xi=\xi_2,(a=b=\al=\be=0)\ou\left(a=b=0,(\al,\be)\not=(0,0),
		 \al^2\mu_1=\be^2\mu_2\right)\right]\ou\\
		&&\left[\xi=\xi_3,(a=b=\al=\be=0)\ou\left(a=b=0,
		(\al,\be)\not=(0,0),\al^2\mu_1\mu_2=\be^2\right)\right].\end{eqnarray*}The homomorphisms $\xi_i,i=1..3$ are given by
	\[ \xi_1=\left( \begin{matrix}
	0&0&a\\0&0&b\\0&0&\ga
	\end{matrix} \right),\;\xi_2=\left( \begin{matrix}
	\al&0&a\\0&\be&b\\0&0&1
	\end{matrix} \right)\esp \xi_3=\left( \begin{matrix}
	0&\be&a\\\al&0&b\\0&0&-1
	\end{matrix} \right). \]
			
			\end{enumerate}
			
	\end{theo}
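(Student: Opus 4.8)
The plan is to follow the same template used in the proof of Theorem~\ref{theohE2}: reduce to the normal forms supplied by Table~\ref{1}, compute $\tau(\xi)$ for each of the three families $\xi_1,\xi_2,\xi_3$ and each of the four pairs of metric normal forms, and then solve the polynomial equations $\tau(\xi)=0$ case by case. Concretely, by Proposition~\ref{pr1} it suffices to work with the differential $\xi:(\s,\prs_1)\too(\s,\prs_2)$, and by the conjugation reduction together with \cite[Theorems 3.3--3.7]{lee} every homomorphism is, up to conjugation, one of $\xi_1,\xi_2,\xi_3$ while every left invariant metric is one of $\mathrm{Diag}(1,1,\nu)$ or $\left(\begin{smallmatrix}1&1&0\\1&\mu&0\\0&0&\nu\end{smallmatrix}\right)$ with $\mu>1$. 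This gives $3\times 4=12$ subcases; in each one we orthonormalize $\prs_1$, use \eqref{eqtension} (equivalently \eqref{tension} with $U^\G=0$ since $\s$ is unimodular) to get a closed expression for $\tau(\xi)$ in the basis $\B_0$, and record exactly when it vanishes.

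The first step is to assemble the ingredients for the formula $\tau(\xi)=U^\xi$ (the term $\xi(U^\G)$ drops because $\s$ is unimodular). For each target metric $\prs_2$ one needs the Levi-Civita product $B$ of $(\s,\prs_2)$, which is computed from \eqref{lc} using the single relation pair $[X_3,X_1]=X_1$, $[X_3,X_2]=-X_2$; for the diagonal metric this is elementary, and for the skew metric $\left(\begin{smallmatrix}1&1&0\\1&\mu&0\\0&0&\nu\end{smallmatrix}\right)$ it is a short but slightly messier linear computation (best handled, as the authors say, with Maple). Then for each source metric $\prs_1$ one picks an orthonormal basis $(e_1,e_2,e_3)$ of $(\s,\prs_1)$ and forms $U^\xi=\sum_i B_{\xi(e_i)}\xi(e_i)$ via \eqref{uxi}. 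The resulting $\tau(\xi)$ will be a vector whose components are rational functions of $a,b,\ga,\al,\be,\mu_1,\mu_2,\nu_1,\nu_2$; setting the numerators to zero and simplifying yields the stated conditions (for example in the diagonal--diagonal case one expects $\tau(\xi_1)$ proportional to $(\ga a, \ga b, a^2-b^2)$ up to positive factors, giving $a=b=0$ or $\ga=0,a^2=b^2$; the skew targets introduce the ratios $\mu_2=a^2/b^2$, $\al^2\mu_1=\be^2\mu_2$, etc.).

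For the biharmonic-implies-harmonic direction one applies Proposition~\ref{test}: one computes the test matrix $M_\xi(\B_0)$ using the trace formulas recorded just before that proposition, checks that $\det M_\xi(\B_0)\neq 0$ generically, and handles the degenerate locus where the determinant vanishes by hand (solving $\tau_2(\xi)=0$ directly there, exactly as is done for $e_0(2)$). Since the theorem as stated only asserts the classification of harmonic homomorphisms (the biharmonic-but-not-harmonic classification for $\s$ presumably appears in the companion Theorem~\ref{theosolbi}), the essential content here is just the vanishing analysis of $\tau(\xi)$, so strictly speaking the proof need not invoke the test matrix at all.

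The main obstacle is bookkeeping rather than conceptual difficulty: one must carefully track the orthonormalization of the non-diagonal source metrics $\left(\begin{smallmatrix}1&1&0\\1&\mu_1&0\\0&0&\nu_1\end{smallmatrix}\right)$, because a wrong orthonormal frame produces a $\tau(\xi)$ that looks different but is the same up to a change of basis, and one must then correctly read off the vanishing conditions in terms of the original structure constants $\al,\be,a,b$ rather than their images in the orthonormal frame. A secondary subtlety is that when $\mu_i>1$ strictly, several denominators ($\mu_i-1$, $\mu_i$, $\nu_i$) are guaranteed nonzero, which is what makes the ratio conditions like $\mu_2=\al^2/\be^2$ clean; one should note where these strict inequalities are used so that the case split matches the four metric regimes in the statement. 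Beyond that, each of the twelve subcases is a routine computation best delegated to Maple, and the proof amounts to tabulating the twelve answers and observing that they coincide with items (1)--(4) of the theorem.
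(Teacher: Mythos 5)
Your proposal is correct and follows essentially the same route as the paper: reduce via Table \ref{1} to the three normal forms $\xi_1,\xi_2,\xi_3$ and the four pairs of metric normal forms, compute $\tau(\xi)=U^\xi$ (using unimodularity of $\s$) in each of the twelve subcases with Maple, and read off the vanishing conditions, which is exactly what the paper does. Your observation that the test-matrix/biharmonic machinery is not needed here (it belongs to Theorem \ref{theosolbi}) is also consistent with the paper's proof.
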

	
	\begin{proof} We use Table \ref{1} to get all the conjugation classes of homomorphisms of $\s$ and for each one we compute $\tau(\xi)$.

	$\bullet$ $\prs_i=\mathrm{Diag}(1,1,\nu_i)$, $i=1,2$ and $\nu_i>0$.	
	We have
	\[ \begin{cases}\di\tau(\xi_1)=-{\frac {\gamma\,a}{\nu_{{1}}}}X_1+{\frac {
			\gamma\,b}{\nu_{{1}}}}X_2+{\frac {{a}^{2}-{b}^{2}}{\nu_{{2}}\nu_{{1}}}}X_3,\\
	\di\tau(\xi_2)=-{\frac {a}{\nu_{{1}}}}X_1+{\frac {b}{\nu_{{1
				}}}}X_2+{\frac { \left( {\alpha}^{2}-{\beta}^{2} \right) \nu_{{1}}+{a}^{2
			}-{b}^{2}}{\nu_{{2}}\nu_{{1}}}}X_3,\\
	\di\tau(\xi_3)={\frac {a}{\nu_{{1}}}}X_1-{\frac {b}{\nu_{{1
				}}}}X_2+{\frac { \left( -{\alpha}^{2}+{\beta}^{2} \right) \nu_{{1}}+{a}^{
				2}-{b}^{2}}{\nu_{{2}}\nu_{{1}}}}X_3.
		\end{cases} \]
		
$\bullet$	$\prs_1=\left( \begin{matrix}
	1&0&0\\0&1&0\\0&0&\nu_1
	\end{matrix} \right)$, $\prs_2=\left( \begin{matrix}
	1&1&0\\1&\mu_2&0\\0&0&\nu_2
	\end{matrix} \right)$ and $\nu_i>0$, $\mu_2>1$. We have	
	$$\begin{cases}\di\tau(\xi_1)=-{\frac { \left(  \left( a+2\,b \right) 
				\mu_{{2}}+a \right) \gamma}{ \left( \mu_{{2}}-1 \right) \nu_{{1}}}}X_1+{
			\frac {\gamma\, \left( b\mu_{{2}}+2\,a+b \right) }{ \left( \mu_{{2}}-1
				\right) \nu_{{1}}}}X_2+{\frac {-{b}^{2}\mu_{{2}}+{a}^{2}}{\nu_{{2}}\nu_{
					{1}}}}X_3,\\
		\di\tau(\xi_2)=-{\frac { \left( a+2\,b \right) \mu_{{2}}+
				a}{ \left( \mu_{{2}}-1 \right) \nu_{{1}}}}X_1+{\frac {b\mu_{{2}}+2\,a+b}{
				\left( \mu_{{2}}-1 \right) \nu_{{1}}}}X_2+{\frac { \left( -{\beta}^{2}
				\mu_{{2}}+{\alpha}^{2} \right) \nu_{{1}}-{b}^{2}\mu_{{2}}+{a}^{2}}{\nu
				_{{2}}\nu_{{1}}}}X_3,\\
		\\
		\di\tau(\xi_3)={\frac { \left( a+2\,b \right) \mu_{{2}}+a
			}{ \left( \mu_{{2}}-1 \right) \nu_{{1}}}}X_1-{\frac {\mu_{{2}}b+2\,a+b}{
			\left( \mu_{{2}}-1 \right) \nu_{{1}}}}X_2+{\frac { \left( -{\alpha}^{2}
			\mu_{{2}}+{\beta}^{2} \right) \nu_{{1}}-{b}^{2}\mu_{{2}}+{a}^{2}}{\nu_
			{{2}}\nu_{{1}}}}X_3.
	\end{cases}$$
	
	$\bullet$  $\prs_1=\left( \begin{matrix}
	1&1&0\\1&\mu_1&0\\0&0&\nu_1
	\end{matrix} \right)$, $\prs_2=\left( \begin{matrix}
	1&0&0\\0&1&0\\0&0&\nu_2
	\end{matrix} \right)$ and $\nu_i>0$, $\mu_1>1$. We have
	\[ \begin{cases}\di\tau(\xi_1)=-{\frac {\gamma\,a}{\nu_{{1}}}}X_1+{\frac {
			\gamma\,b}{\nu_{{1}}}}X_2+{\frac {{a}^{2}-{b}^{2}}{\nu_{{2}}\nu_{{1}}}}X_3,\\
	\di\tau(\xi_2)= -{\frac {a}{\nu_{{1}}}}X_1+{\frac {b}{\nu_{{1
				}}}}X_2+{\frac { \left( {\alpha}^{2}\nu_{{1}}+{a}^{2}-{b}^{2} \right) \mu
				_{{1}}-{\beta}^{2}\nu_{{1}}-{a}^{2}+{b}^{2}}{\nu_{{2}} \left( \mu_{{1}
				}-1 \right) \nu_{{1}}}}X_3,
		\\
	\di\tau(\xi_3)={\frac {a}{\nu_{{1}}}}X_1-{\frac {b}{\nu_{{1
				}}}}X_2+{\frac { \left( -{\alpha}^{2}\nu_{{1}}+{a}^{2}-{b}^{2} \right) 
				\mu_{{1}}+{\beta}^{2}\nu_{{1}}-{a}^{2}+{b}^{2}}{\nu_{{2}} \left( \mu_{
					{1}}-1 \right) \nu_{{1}}}}X_3.
			\end{cases} \] 
	$\bullet$ $\prs_1=\left( \begin{matrix}
	1&1&0\\1&\mu_1&0\\0&0&\nu_1
	\end{matrix} \right)$, $\prs_2=\left( \begin{matrix}
	1&1&0\\1&\mu_2&0\\0&0&\nu_2
	\end{matrix} \right)$ and $\nu_i>0$, $\mu_i>1$. We have
	\[ \begin{cases}\di\tau(\xi_1)=-{\frac { \left(  \left( a+2\,b \right) 
			\mu_{{2}}+a \right) \gamma}{ \left( \mu_{{2}}-1 \right) \nu_{{1}}}}X_1+
	{\frac {\gamma\, \left( b\mu_{{2}}+2\,a+b \right) }{ \left( \mu_{{2}}-1
			\right) \nu_{{1}}}}X_2+{\frac {-{b}^{2}\mu_{{2}}+{a}^{2}}{\nu_{{2}}\nu_{
				{1}}}}X_3,\\
	\di\tau(\xi_2)=-{\frac { \left( a+2\,b \right) \mu_{{2}}+
			a}{ \left( \mu_{{2}}-1 \right) \nu_{{1}}}}X_1+{\frac {b\mu_{{2}}+2\,a+b}{
			\left( \mu_{{2}}-1 \right) \nu_{{1}}}}X_2+{\frac { \left( {\alpha}^{2}
			\nu_{{1}}-{b}^{2}\mu_{{2}}+{a}^{2} \right) \mu_{{1}}+ \left( -{\beta}^
			{2}\nu_{{1}}+{b}^{2} \right) \mu_{{2}}-{a}^{2}}{\nu_{{2}} \left( \mu_{
				{1}}-1 \right) \nu_{{1}}}}X_3,
	\\
	\di\tau(\xi_3)={\frac { \left( a+2\,b \right) \mu_{{2}}+a
		}{ \left( \mu_{{2}}-1 \right) \nu_{{1}}}}X_1-{\frac {b\mu_{{2}}+2\,a+b}{
		\left( \mu_{{2}}-1 \right) \nu_{{1}}}}X_2+{\frac { \left(  \left( -{
			\alpha}^{2}\nu_{{1}}-{b}^{2} \right) \mu_{{2}}+{a}^{2} \right) \mu_{{1
			}}+{b}^{2}\mu_{{2}}+{\beta}^{2}\nu_{{1}}-{a}^{2}}{\nu_{{2}} \left( \mu
			_{{1}}-1 \right) \nu_{{1}}}}X_3.
		\end{cases} \]One can check that $\tau(\xi_i)=0$ are equivalent to the conditions given in the theorem.
			\end{proof}

			\begin{theo}\label{theosolbi} An homomorphism of $\s$ is biharmonic if and only if it is harmonic.
				
				\end{theo}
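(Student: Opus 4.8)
Since $\s=\mathfrak{sol}$ is unimodular, the criterion of Proposition \ref{test} is available: a homomorphism $\xi:(\s,\prs_1)\too(\s,\prs_2)$ whose test matrix $M_\xi(\B_0)$ is invertible is biharmonic if and only if it is harmonic. The plan is therefore to run through exactly the list of data already used in the proof of Theorem \ref{theohsol} --- the four families of metric pairs and the three conjugacy classes $\xi_1,\xi_2,\xi_3$ of homomorphisms supplied by Table \ref{1} --- and, for each resulting case, to compute with Maple (via the formula for the entries $m_{ij}$ stated just before Proposition \ref{test}) the $3\times3$ test matrix $M_\xi(\B_0)$ together with $\det M_\xi(\B_0)$ as a function of the parameters $\al,\be,a,b,\ga,\mu_1,\mu_2,\nu_1,\nu_2$. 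Every subcase with $\det M_\xi(\B_0)\neq0$ is then closed immediately by Proposition \ref{test}, so only the vanishing locus of $\det M_\xi(\B_0)$ remains to be treated.

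On that locus I would argue exactly as in the proof of Theorem \ref{theobihE2}: first compute $\tau_2(\xi)$ explicitly from \eqref{tension}. If $\tau(\xi)=0$ there is nothing to prove; otherwise suppose $\tau(\xi)\neq0$ and $\tau_2(\xi)=0$, so that $\tau(\xi)$ solves the linear system $M_\xi(\B_0)X=0$ displayed before Proposition \ref{test}. When on the branch in question $M_\xi(\B_0)$ has rank $2$, its kernel is spanned by a single explicit vector $v$, hence $\tau(\xi)$ is proportional to $v$; writing out the proportionality of the explicitly known vectors $\tau(\xi)$ and $v$ and inserting the defining equalities of the branch forces $\tau(\xi)=0$, a contradiction. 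When $M_\xi(\B_0)$ drops rank further, one reads directly off the components of $\tau_2(\xi)$ that $\tau_2(\xi)=0$ already entails the corresponding harmonicity conditions ($a=b=0$, $a^2=b^2$, $\al^2=\be^2$, $\al^2\mu_1=\be^2\mu_2$, etc.). As a sample of the mechanism, and in contrast with the $e_0(2)$ case, for $\xi=\xi_1$ with $\prs_i=\mathrm{Diag}(1,1,\nu_i)$ and $\ga=0$ one gets $\tau_2(\xi_1)=\frac{2(a^4-b^4)}{\nu_1^2\nu_2^2}X_3$, which vanishes over $\R$ exactly when $a^2=b^2$, i.e. exactly when $\tau(\xi_1)=0$.

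The only real obstacle is the volume of computation: for $\mathfrak{sol}$ equipped with a non-diagonal metric the Levi-Civita product is non-trivial, so both $\tau_2(\xi)$ and $\det M_\xi(\B_0)$ become sizeable rational expressions in nine parameters, and the case split (three homomorphism types $\times$ four metric pairs, each further divided according to whether the test determinant vanishes) is lengthy. What makes it tractable is that in every branch $\det M_\xi(\B_0)$ factors into elementary pieces --- powers of $\nu_1,\nu_2$ times linear factors such as $\mu_i-1$ and quadratic factors such as $\al^2-\be^2$, $\al^2\mu_1-\be^2\mu_2$, $\al^2\mu_1\mu_2-\be^2$ --- so its zero set is explicit and small, and on each piece the rank argument above goes through. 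Assembling the branches closed by Proposition \ref{test} with those treated by hand yields ``biharmonic $\Longleftrightarrow$ harmonic'' for every homomorphism of $\s$, which is the assertion of the theorem.
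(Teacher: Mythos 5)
Your plan is essentially the paper's proof: run through the three conjugacy classes $\xi_1,\xi_2,\xi_3$ and the four metric pairs from Table \ref{1}, compute the test matrix and invoke Proposition \ref{test} whenever $\det M_\xi(\B_0)\neq0$, and treat the degenerate locus by computing $\tau_2(\xi)$ directly; your sample computation $\tau_2(\xi_1)=\frac{2(a^4-b^4)}{\nu_1^2\nu_2^2}X_3$ for $\ga=0$ and diagonal metrics agrees with the paper. Two caveats on the details, though. First, your predicted factorization of the determinants is off: the actual determinants are, up to positive factors, $\al^2+\be^2$, $(\mu_2-1)(\be^2\mu_2+\al^2)$, $\al^2\mu_1+\be^2$, $(\mu_2-1)(\al^2\mu_1+\be^2\mu_2)$, $(\mu_2-1)(\al^2\mu_1\mu_2+\be^2)$ --- sums, not differences, of squares (recall $\mu_i>1$) --- so they vanish only when $\al=\be=0$, in which case $\xi_2$ or $\xi_3$ degenerates to $\xi_1$ with $\ga=\pm1$. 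Consequently the rank-two kernel argument you import from Theorem \ref{theobihE2} is never actually needed here, which is fortunate, since your assertion that on branches like $\al^2=\be^2$ it would ``force $\tau(\xi)=0$'' is exactly the kind of claim that fails for $e_0(2)$ and would have required verification. Second, the genuinely delicate degenerate case is $\xi=\xi_1$ with the non-diagonal target metric $\prs_2$: there the vanishing of $\tau_2(\xi)$ is not read off componentwise; the paper splits on whether $\mu_2=a^2/b^2$, and in each branch uses $\ga\neq0$ or $\mu_2>1$ to reach a contradiction with $(a,b)\neq(0,0)$. Your outline covers this case in principle (direct computation of $\tau_2$ on the degenerate locus), but the argument there is a short sign analysis rather than a mere inspection, so that step should be spelled out for the proof to be complete.
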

				
				\begin{proof} As above, we put
					\[ \xi_1=\left( \begin{matrix}
					0&0&a\\0&0&b\\0&0&\ga
					\end{matrix} \right),\;\xi_2=\left( \begin{matrix}
					\al&0&a\\0&\be&b\\0&0&1
					\end{matrix} \right)\esp \xi_3=\left( \begin{matrix}
					0&\be&a\\\al&0&b\\0&0&-1
					\end{matrix} \right). \]
		Let $\xi:(\s,\prs_1)\too(\s,\prs_2)$ an homomorphism. Table \ref{1} gives all the possible conjugation classes of $\xi$ and we will show that for each case $\xi$ is biharmonic if and only if $\xi$ is harmonic.
		
		$\bullet$ $\xi=\xi_1$ and $\prs_i=\mathrm{Diag}(1,1,\nu_i)$, $i=1,2$ and $\nu_i>0$. We have
		\[ \tau_2(\xi)=-2\,{\frac { \left( 1/2\,{\gamma}^{2}\nu_{
					{2}}+{a}^{2}-{b}^{2} \right) a\gamma}{{\nu_{{1}}}^{2}\nu_{{2}}}}X_1-2\,{
			\frac { \left( -1/2\,{\gamma}^{2}\nu_{{2}}+{a}^{2}-{b}^{2} \right) 
				\gamma\,b}{{\nu_{{1}}}^{2}\nu_{{2}}}}X_2+{\frac {{\gamma}^{2} \left( {a}^
				{2}-{b}^{2} \right) \nu_{{2}}+2\,{a}^{4}-2\,{b}^{4}}{{\nu_{{2}}}^{2}{
					\nu_{{1}}}^{2}}}X_3. \]
		One can see easily that $\tau_2(\xi)=0$ if and only if $(a=b=0)$ or $(\ga=0,a^2=b^2)$ which, according to Theorem \ref{theohsol}, is equivalent to $\xi$ is harmonic.
		
		$\bullet$ $\xi=\xi_2$ and $\prs_i=\mathrm{Diag}(1,1,\nu_i)$, $i=1,2$ and $\nu_i>0$. We have
		\[\begin{cases}\di \tau_2(\xi)=-2\,{\frac {a \left(  \left( {\alpha}^{2}-
				{\beta}^{2} \right) \nu_{{1}}+{a}^{2}-{b}^{2}+1/2\,\nu_{{2}} \right) 
			}{{\nu_{{1}}}^{2}\nu_{{2}}}}X_1-2\,{\frac { \left(  \left( {\alpha}^{2}-
			{\beta}^{2} \right) \nu_{{1}}+{a}^{2}-{b}^{2}-1/2\,\nu_{{2}} \right) b
		}{{\nu_{{1}}}^{2}\nu_{{2}}}}X_2\\\di+{\frac {2\,{\alpha}^{4}{\nu_{{1}}}^{2}-2
		\,{\beta}^{4}{\nu_{{1}}}^{2}+4\,{a}^{2}{\alpha}^{2}\nu_{{1}}-4\,{b}^{2
		}{\beta}^{2}\nu_{{1}}+2\,{a}^{4}-2\,{b}^{4}+{a}^{2}\nu_{{2}}-{b}^{2}
		\nu_{{2}}}{{\nu_{{1}}}^{2}{\nu_{{2}}}^{2}}}X_3.\end{cases}
 \]We have also
 \[ M_\xi(\B_0)=\left[ \begin {array}{ccc} {\nu_{{1}}}^{-1}&0&-2\,{\frac {a}{\nu_{{1}
 		}}}\\ \noalign{\medskip}0&{\nu_{{1}}}^{-1}&-2\,{\frac {b}{\nu_{{1}}}}
 		\\ \noalign{\medskip}-{\frac {a}{\nu_{{1}}}}&-{\frac {b}{\nu_{{1}}}}&{
 			\frac {2\,{\alpha}^{2}\nu_{{1}}+2\,{\beta}^{2}\nu_{{1}}+2\,{a}^{2}+2\,
 				{b}^{2}}{\nu_{{1}}}}\end {array} \right]\esp \det(M_\xi(\B_0))=
 		2\,{\frac {{\alpha}^{2}+{\beta}^{2}}{{\nu_{{1}}}^{2}}}. 
 		\]According to Proposition \ref{test}, if $(\al,\be)\not=(0,0)$ then $\xi$ is biharmonic if and only if it is harmonic. If $\al=\be=0$ then
 		$\xi=\xi_1$ with $\ga=1$ and we can use the arguments used in the precedent case to conclude.
		
\		
		$\bullet$ $\xi=\xi_3$ and $\prs_i=\mathrm{Diag}(1,1,\nu_i)$, $i=1,2$ and $\nu_i>0$. We have
		\[ \begin{cases}\di\tau_2(\xi)={\frac {a \left( -2\,{\alpha}^{2}\nu_{{1}}
				+2\,{\beta}^{2}\nu_{{1}}+2\,{a}^{2}-2\,{b}^{2}+\nu_{{2}} \right) }{{
					\nu_{{1}}}^{2}\nu_{{2}}}}X_1+{\frac {b \left( -2\,{\alpha}^{2}\nu_{{1}}+2
				\,{\beta}^{2}\nu_{{1}}+2\,{a}^{2}-2\,{b}^{2}-\nu_{{2}} \right) }{{\nu_
					{{1}}}^{2}\nu_{{2}}}}X_2\\\di+{\frac {-2\,{\alpha}^{4}{\nu_{{1}}}^{2}+2\,{
					\beta}^{4}{\nu_{{1}}}^{2}+4\,{a}^{2}{\beta}^{2}\nu_{{1}}-4\,{\alpha}^{
					2}{b}^{2}\nu_{{1}}+2\,{a}^{4}-2\,{b}^{4}+{a}^{2}\nu_{{2}}-{b}^{2}\nu_{
					{2}}}{{\nu_{{1}}}^{2}{\nu_{{2}}}^{2}}}X_3
			\end{cases} \]and
			\[ M_\xi(\B_0)=\left[ \begin {array}{ccc} {\nu_{{1}}}^{-1}&0&2\,{\frac {a}{\nu_{{1}}
				}}\\ \noalign{\medskip}0&{\nu_{{1}}}^{-1}&2\,{\frac {b}{\nu_{{1}}}}
				\\ \noalign{\medskip}{\frac {a}{\nu_{{1}}}}&{\frac {b}{\nu_{{1}}}}&{
					\frac {2\,{\alpha}^{2}\nu_{{1}}+2\,{\beta}^{2}\nu_{{1}}+2\,{a}^{2}+2\,
						{b}^{2}}{\nu_{{1}}}}\end {array} \right] \esp \det(M_\xi(\B_0))=
				2\,{\frac {{\alpha}^{2}+{\beta}^{2}}{{\nu_{{1}}}^{2}}}
				 \]and the situation is similar to the precedent cases.

		$\bullet$ $\xi=\xi_1$ and	$\prs_1=\left( \begin{matrix}
		1&0&0\\0&1&0\\0&0&\nu_1
		\end{matrix} \right)$, $\prs_2=\left( \begin{matrix}
		1&1&0\\1&\mu_2&0\\0&0&\nu_2
		\end{matrix} \right)$ and $\nu_i>0$, $\mu_2>1$.
		\[ \begin{cases}\di\tau(\xi)=-{\frac {\gamma\, \left(  \left( a+2\,b
				\right) \mu_{{2}}+a \right) }{ \left( \mu_{{2}}-1 \right) \nu_{{1}}}}
		X_1+{\frac {\gamma\, \left( b\mu_{{2}}+2\,a+b \right) }{ \left( \mu_{{2}}
				-1 \right) \nu_{{1}}}}X_2+{\frac {-{b}^{2}\mu_{{2}}+{a}^{2}}{\nu_{{2}}\nu
				_{{1}}}}X_3,\\
		\di\tau_2(\xi)= -2\,{\frac { \left( -{b}^{2} \left( a-b
				\right) {\mu_{{2}}}^{3}+ \left( {a}^{3}-{a}^{2}b+ \left( 1/2\,{\gamma
				}^{2}\nu_{{2}}+{b}^{2} \right) a+2\,b{\gamma}^{2}\nu_{{2}}-{b}^{3}
				\right) {\mu_{{2}}}^{2}+ \left( 3\,a{\gamma}^{2}\nu_{{2}}+2\,b{\gamma
				}^{2}\nu_{{2}}-{a}^{3}+{a}^{2}b \right) \mu_{{2}}+1/2\,a{\gamma}^{2}
				\nu_{{2}} \right) \gamma}{{\nu_{{1}}}^{2}\nu_{{2}} \left( \mu_{{2}}-1
				\right) ^{2}}}X_1\\\di+2\,{\frac {\gamma\, \left( {b}^{3}{\mu_{{2}}}^{3}-b
				\left( -1/2\,{\gamma}^{2}\nu_{{2}}+{a}^{2}+ab+{b}^{2} \right) {\mu_{{
							2}}}^{2}+ \left( a{b}^{2}+ \left( 3\,{\gamma}^{2}\nu_{{2}}+{a}^{2}
				\right) b+2\,a{\gamma}^{2}\nu_{{2}}+{a}^{3} \right) \mu_{{2}}+2\,a{
					\gamma}^{2}\nu_{{2}}+1/2\,b{\gamma}^{2}\nu_{{2}}-{a}^{3} \right) }{{
					\nu_{{1}}}^{2}\nu_{{2}} \left( \mu_{{2}}-1 \right) ^{2}}}X_2\\\di+2\,{\frac {
				\left( -{b}^{2}\mu_{{2}}+{a}^{2} \right)  \left( {b}^{2}{\mu_{{2}}}^{
					2}+ \left( 1/2\,{\gamma}^{2}\nu_{{2}}+{a}^{2}-{b}^{2} \right) \mu_{{2}
				}+3/2\,{\gamma}^{2}\nu_{{2}}-{a}^{2} \right) }{{\nu_{{1}}}^{2}{\nu_{{2
					}}}^{2} \left( \mu_{{2}}-1 \right) }}X_3
			
			\end{cases} \]
			
			Suppose that $\xi$ is biharmonic not harmonic. Then, by virtue of Theorem \ref{theohsol}, $(a,b)\not=0$ and ($\ga\not=0$ or $\mu_2\not=\frac{a^2}{b^2}$). If $\mu_2=\frac{a^2}{b^2}$ then a direct computation shows that
			\[ \tau_2(\xi)=-{\frac { \left( a+b \right) ^{2}{\gamma}^
					{3}a}{ \left( a-b \right) ^{2}{\nu_{{1}}}^{2}}}X_1+{\frac {b \left( a+b
					\right) ^{2}{\gamma}^{3}}{ \left( a-b \right) ^{2}{\nu_{{1}}}^{2}}}X_2 \]
			and since $(a,b)\not=(0,0)$, $\ga\not=0$ and $\mu_2>1$ this is impossible so we must have $\mu_2\not=\frac{a^2}{b^2}$. In this case, since the last coordinate of $\tau_2(\xi)$ vanishes, we get
			\[ \left( \frac12\,{\gamma}^{2}\nu_{{2}}+{a}^{2} \right) \mu_{{2}}+\frac32\,{
				\gamma}^{2}\nu_{{2}}-{a}^{2}+ \left( {\mu_{{2}}}^{2}-\mu_{{2}}
			\right) {b}^{2}= \frac12\,{\gamma}^{2}\nu_{{2}}\mu_{{2}}+{a}^{2} (\mu_2-1) +\frac32\,{
				\gamma}^{2}\nu_{{2}}+ \left( {\mu_{{2}}}^{2}-\mu_{{2}}
			\right) {b}^{2}=0
			\]
			But since $\mu_2>1$, this is equivalent to $\ga=a=b=0$ which is a contradiction.
			 Finally, $\xi$ is biharmonic if and only if it is harmonic.\\

		$\bullet$ $\xi=\xi_2$ and	$\prs_1=\left( \begin{matrix}
		1&0&0\\0&1&0\\0&0&\nu_1
		\end{matrix} \right)$, $\prs_2=\left( \begin{matrix}
		1&1&0\\1&\mu_2&0\\0&0&\nu_2
		\end{matrix} \right)$ and $\nu_i>0$, $\mu_2>1$.	 We have
		\[ M_\xi(\B_0)=\left[ \begin {array}{ccc} {\nu_{{1}}}^{-1}&-{\nu_{{1}}}^{-1}&-2\,{
			\frac {a}{\nu_{{1}}}}\\ \noalign{\medskip}-{\nu_{{1}}}^{-1}&{\frac {
				\mu_{{2}}}{\nu_{{1}}}}&-2\,{\frac {b\mu_{{2}}}{\nu_{{1}}}}
		\\ \noalign{\medskip}{\frac {-a+b}{\nu_{{1}}}}&{\frac {-b\mu_{{2}}+a}{
				\nu_{{1}}}}&{\frac {2\,\nu_{{1}}{\beta}^{2}\mu_{{2}}+2\,{\alpha}^{2}
				\nu_{{1}}+2\,{b}^{2}\mu_{{2}}+2\,{a}^{2}}{\nu_{{1}}}}\end {array}
		\right] \esp\det(M_\xi(\B_0))=2\,{\frac { \left( \mu_{{2}}-1 \right)  \left( {\beta}^{2}\mu_{{2}}+{
					\alpha}^{2} \right) }{{\nu_{{1}}}^{2}}}
		 \]If $(\al,\be)\not=(0,0)$ then, according to Proposition \ref{test}, $\xi$ is biharmonic if and only if it is harmonic. If $\al=\be=0$ then
		 $\xi=\xi_1$ with $\ga=1$ and we can use the arguments used in the precedent case to conclude.

	$\bullet$ $\xi=\xi_3$ and	$\prs_1=\left( \begin{matrix}
	1&0&0\\0&1&0\\0&0&\nu_1
	\end{matrix} \right)$, $\prs_2=\left( \begin{matrix}
	1&1&0\\1&\mu_2&0\\0&0&\nu_2
	\end{matrix} \right)$ and $\nu_i>0$, $\mu_2>1$.	We have
	\[ M_\xi(\B_0)=\left[ \begin {array}{ccc} {\nu_{{1}}}^{-1}&-{\nu_{{1}}}^{-1}&2\,{
		\frac {a}{\nu_{{1}}}}\\ \noalign{\medskip}-{\nu_{{1}}}^{-1}&{\frac {
			\mu_{{2}}}{\nu_{{1}}}}&2\,{\frac {b\mu_{{2}}}{\nu_{{1}}}}
	\\ \noalign{\medskip}{\frac {a-b}{\nu_{{1}}}}&{\frac {b\mu_{{2}}-a}{
			\nu_{{1}}}}&{\frac {2\,{\alpha}^{2}\mu_{{2}}\nu_{{1}}+2\,{b}^{2}\mu_{{
					2}}+2\,{\beta}^{2}\nu_{{1}}+2\,{a}^{2}}{\nu_{{1}}}}\end {array}
	\right] \esp\det(M_\xi(\B_0))=2\,{\frac { \left( \mu_{{2}}-1 \right)  \left( {\alpha}^{2}\mu_{{2}}+{
				\beta}^{2} \right) }{{\nu_{{1}}}^{2}}}
	 \]If $(\al,\be)\not=(0,0)$ then, according to Proposition \ref{test}, $\xi$ is biharmonic if and only if it is harmonic. If $\al=\be=0$ then
	 $\xi=\xi_1$ with $\ga=1$ and we can use the arguments used in the precedent case to conclude.

	$\bullet$ 	$\xi=\xi_1$ and $\prs_1=\left( \begin{matrix}
	1&1&0\\1&\mu_1&0\\0&0&\nu_1
	\end{matrix} \right)$, $\prs_2=\left( \begin{matrix}
	1&0&0\\0&1&0\\0&0&\nu_2
	\end{matrix} \right)$ and $\nu_i>0$, $\mu_1>1$. We have
	\[ \tau_2(\xi)=-2\,{\frac {\gamma\, \left( 1/2\,{\gamma}^
			{2}\nu_{{2}}+{a}^{2}-{b}^{2} \right) a}{{\nu_{{1}}}^{2}\nu_{{2}}}}X_1-2
	\,{\frac {b \left( -1/2\,{\gamma}^{2}\nu_{{2}}+{a}^{2}-{b}^{2}
			\right) \gamma}{{\nu_{{1}}}^{2}\nu_{{2}}}}X_2+{\frac {{\gamma}^{2}
			\left( {a}^{2}-{b}^{2} \right) \nu_{{2}}+2\,{a}^{4}-2\,{b}^{4}}{{\nu_
				{{2}}}^{2}{\nu_{{1}}}^{2}}}X_3 \]
	One can see easily that $\tau_2(\xi)=0$ if and only if $(a=b=0)$ or $(\ga=0,a^2=b^2)$ which, according to Theorem \ref{theohsol}, is equivalent to $\xi$ is harmonic.

	$\bullet$ 	$\xi=\xi_2$ and $\prs_1=\left( \begin{matrix}
	1&1&0\\1&\mu_1&0\\0&0&\nu_1
	\end{matrix} \right)$, $\prs_2=\left( \begin{matrix}
	1&0&0\\0&1&0\\0&0&\nu_2
	\end{matrix} \right)$ and $\nu_i>0$, $\mu_1>1$. We have
	\[ M_\xi(\B_0)=\left[ \begin {array}{ccc} {\nu_{{1}}}^{-1}&0&-2\,{\frac {a}{\nu_{{1}
			}}}\\ \noalign{\medskip}0&{\nu_{{1}}}^{-1}&-2\,{\frac {b}{\nu_{{1}}}}
			\\ \noalign{\medskip}-{\frac {a}{\nu_{{1}}}}&-{\frac {b}{\nu_{{1}}}}&{
				\frac { \left( 2\,{\alpha}^{2}\nu_{{1}}+2\,{a}^{2}+2\,{b}^{2} \right) 
					\mu_{{1}}+2\,{\beta}^{2}\nu_{{1}}-2\,{a}^{2}-2\,{b}^{2}}{ \left( \mu_{
						{1}}-1 \right) \nu_{{1}}}}\end {array} \right] 
			\esp\det(M_\xi(\B_0))=2\,{\frac {{\alpha}^{2}\mu_{{1}}+{\beta}^{2}}{{\nu_{{1}}}^{2} \left( 
					\mu_{{1}}-1 \right) }}. \]
			If $(\al,\be)\not=(0,0)$ then, according to Proposition \ref{test}, $\xi$ is biharmonic if and only if it is harmonic. If $\al=\be=0$ then
			$\xi=\xi_1$ with $\ga=1$ and we can use the arguments used in the precedent case to conclude.

	$\bullet$ 	$\xi=\xi_3$ and $\prs_1=\left( \begin{matrix}
	1&1&0\\1&\mu_1&0\\0&0&\nu_1
	\end{matrix} \right)$, $\prs_2=\left( \begin{matrix}
	1&0&0\\0&1&0\\0&0&\nu_2
	\end{matrix} \right)$ and $\nu_i>0$, $\mu_1>1$. We have
	\[ M_\xi(\B_0)=\left[ \begin {array}{ccc} {\nu_{{1}}}^{-1}&0&2\,{\frac {a}{\nu_{{1}}
		}}\\ \noalign{\medskip}0&{\nu_{{1}}}^{-1}&2\,{\frac {b}{\nu_{{1}}}}
		\\ \noalign{\medskip}{\frac {a}{\nu_{{1}}}}&{\frac {b}{\nu_{{1}}}}&{
			\frac { \left( 2\,{\alpha}^{2}\nu_{{1}}+2\,{a}^{2}+2\,{b}^{2} \right) 
				\mu_{{1}}+2\,{\beta}^{2}\nu_{{1}}-2\,{a}^{2}-2\,{b}^{2}}{ \left( \mu_{
					{1}}-1 \right) \nu_{{1}}}}\end {array} \right] \esp\det(M_\xi(\B_0))=2\,{\frac {{\alpha}^{2}\mu_{{1}}+{\beta}^{2}}{{\nu_{{1}}}^{2} \left( 
				\mu_{{1}}-1 \right) }}.
		 \]The situation is similar to the precedent case.
	
	$\bullet$ $\xi=\xi_1$ and $\prs_1=\left( \begin{matrix}
	1&1&0\\1&\mu_1&0\\0&0&\nu_1
	\end{matrix} \right)$, $\prs_2=\left( \begin{matrix}
	1&1&0\\1&\mu_2&0\\0&0&\nu_2
	\end{matrix} \right)$ and $\nu_i>0$, $\mu_i>1$. We have
	\[ \begin{cases}\di\tau_2(\xi)=-2\,{\frac {\gamma\, \left( -{b}^{2}
			\left( a-b \right) {\mu_{{2}}}^{3}+ \left( {a}^{3}-{a}^{2}b+ \left( 1
			/2\,{\gamma}^{2}\nu_{{2}}+{b}^{2} \right) a+2\,b{\gamma}^{2}\nu_{{2}}-
			{b}^{3} \right) {\mu_{{2}}}^{2}+ \left( 3\,a{\gamma}^{2}\nu_{{2}}+2\,b
			{\gamma}^{2}\nu_{{2}}-{a}^{3}+{a}^{2}b \right) \mu_{{2}}+1/2\,a{\gamma
			}^{2}\nu_{{2}} \right) }{{\nu_{{1}}}^{2}\nu_{{2}} \left( \mu_{{2}}-1
			\right) ^{2}}}X_1\\+\di2\,{\frac {\gamma\, \left( {b}^{3}{\mu_{{2}}}^{3}-b
			\left( -1/2\,{\gamma}^{2}\nu_{{2}}+{a}^{2}+ab+{b}^{2} \right) {\mu_{{
						2}}}^{2}+ \left( a{b}^{2}+ \left( 3\,{\gamma}^{2}\nu_{{2}}+{a}^{2}
			\right) b+2\,a{\gamma}^{2}\nu_{{2}}+{a}^{3} \right) \mu_{{2}}+2\,a{
				\gamma}^{2}\nu_{{2}}+1/2\,b{\gamma}^{2}\nu_{{2}}-{a}^{3} \right) }{{
				\nu_{{1}}}^{2}\nu_{{2}} \left( \mu_{{2}}-1 \right) ^{2}}}X_2+\\\di2\,{\frac {
			\left( -{b}^{2}\mu_{{2}}+{a}^{2} \right)  \left( {b}^{2}{\mu_{{2}}}^{
				2}+ \left( 1/2\,{\gamma}^{2}\nu_{{2}}+{a}^{2}-{b}^{2} \right) \mu_{{2}
			}+3/2\,{\gamma}^{2}\nu_{{2}}-{a}^{2} \right) }{{\nu_{{1}}}^{2}{\nu_{{2
				}}}^{2} \left( \mu_{{2}}-1 \right) }}X_3
		\end{cases} \]One can see that $\tau_2(\xi)$ is the same as in the case
		$\xi=\xi_1$ and $\prs_1=\left( \begin{matrix}
		1&0&0\\0&1&0\\0&0&\nu_1
		\end{matrix} \right)$, $\prs_2=\left( \begin{matrix}
		1&1&0\\1&\mu_2&0\\0&0&\nu_2
		\end{matrix} \right)$ and we can use the same arguments to conclude.

	$\bullet$ $\xi=\xi_2$ and $\prs_1=\left( \begin{matrix}
	1&1&0\\1&\mu_1&0\\0&0&\nu_1
	\end{matrix} \right)$, $\prs_2=\left( \begin{matrix}
	1&1&0\\1&\mu_2&0\\0&0&\nu_2
	\end{matrix} \right)$ and $\nu_i>0$, $\mu_i>1$. We have
\[ M_\xi(\B_0)	\left[ \begin {array}{ccc} {\nu_{{1}}}^{-1}&-{\nu_{{1}}}^{-1}&-2\,{
		\frac {a}{\nu_{{1}}}}\\ \noalign{\medskip}-{\nu_{{1}}}^{-1}&{\frac {
			\mu_{{2}}}{\nu_{{1}}}}&-2\,{\frac {\mu_{{2}}b}{\nu_{{1}}}}
	\\ \noalign{\medskip}{\frac {-a+b}{\nu_{{1}}}}&{\frac {-\mu_{{2}}b+a}{
			\nu_{{1}}}}&{\frac { \left( 2\,{\alpha}^{2}\nu_{{1}}+2\,{b}^{2}\mu_{{2
				}}+2\,{a}^{2} \right) \mu_{{1}}+ \left( 2\,{\beta}^{2}\nu_{{1}}-2\,{b}
				^{2} \right) \mu_{{2}}-2\,{a}^{2}}{\nu_{{1}} \left( \mu_{{1}}-1
				\right) }}\end {array} \right]\esp\det(M_\xi(\B_0))=2\,{\frac { \left( \mu_{{2}}-1 \right)  \left( {\alpha}^{2}\mu_{{1}}+{
					\beta}^{2}\mu_{{2}} \right) }{{\nu_{{1}}}^{2} \left( \mu_{{1}}-1
				\right) }}.
	  \]If $(\al,\be)\not=(0,0)$ then, according to Proposition \ref{test}, $\xi$ is biharmonic if and only if it is harmonic. If $\al=\be=0$ then
	  $\xi=\xi_1$ with $\ga=1$ and we can use the arguments used in the precedent case to conclude.

	$\bullet$ $\xi=\xi_3$ and $\prs_1=\left( \begin{matrix}
	1&1&0\\1&\mu_1&0\\0&0&\nu_1
	\end{matrix} \right)$, $\prs_2=\left( \begin{matrix}
	1&1&0\\1&\mu_2&0\\0&0&\nu_2
	\end{matrix} \right)$ and $\nu_i>0$, $\mu_i>1$. We have
	\[ M_\xi(\B_0)=\left[ \begin {array}{ccc} {\nu_{{1}}}^{-1}&-{\nu_{{1}}}^{-1}&2\,{
		\frac {a}{\nu_{{1}}}}\\ \noalign{\medskip}-{\nu_{{1}}}^{-1}&{\frac {
			\mu_{{2}}}{\nu_{{1}}}}&2\,{\frac {\mu_{{2}}b}{\nu_{{1}}}}
	\\ \noalign{\medskip}{\frac {a-b}{\nu_{{1}}}}&{\frac {\mu_{{2}}b-a}{
			\nu_{{1}}}}&{\frac { \left(  \left( 2\,{\alpha}^{2}\nu_{{1}}+2\,{b}^{2
			} \right) \mu_{{2}}+2\,{a}^{2} \right) \mu_{{1}}-2\,{b}^{2}\mu_{{2}}+2
			\,{\beta}^{2}\nu_{{1}}-2\,{a}^{2}}{\nu_{{1}} \left( \mu_{{1}}-1
			\right) }}\end {array} \right] \esp\det(M_\xi(B_0))=2\,{\frac { \left( \mu_{{2}}-1 \right)  \left( {\alpha}^{2}\mu_{{1}}
			\mu_{{2}}+{\beta}^{2} \right) }{{\nu_{{1}}}^{2} \left( \mu_{{1}}-1
			\right) }}.
	 \]The situation is similar to the precedent case.
					\end{proof}
\section{Harmonic and biharmonic homomorphisms of $\mathfrak{su}(2)$}\label{section6}	

The following proposition is a consequence of \cite[Proposition 2.5]{boucetta}.
\begin{pr}\label{bi} Let $\xi:(\mathfrak{su}(2),\prs_1)\too(\mathfrak{su}(2),\prs_2)$ be an automorphism. If $\prs_1$ or $\prs_2$ is bi-invariant then $\xi$ is harmonic.
	
	\end{pr}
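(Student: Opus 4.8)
The plan is to exploit the two trace formulas quoted just before Proposition~\ref{test} (from \cite[Proposition 2.4]{boucetta}), which express $\langle\tau(\xi),u\rangle_2$ and $\langle\tau_2(\xi),u\rangle_2$ entirely in terms of $\ad$, $\xi$, $\xi^*$ and the adjoints $\ad_u^*$. Since $\mathfrak{su}(2)$ is unimodular, these formulas apply. The key point is that $\xi$ is an \emph{automorphism}, so $\xi$ is invertible and, being a Lie algebra homomorphism, satisfies $\xi\circ\ad_v=\ad_{\xi v}\circ\xi$ for all $v$. I would first record this intertwining identity and use it to simplify $\tau(\xi)$: for any $u$,
\[
\langle\tau(\xi),u\rangle_2=\tr(\xi^*\circ\ad_u\circ\xi)=\tr(\ad_u\circ\xi\circ\xi^*).
\]
So $\tau(\xi)$ is controlled by the symmetric positive operator $P:=\xi\circ\xi^*:\mathfrak{su}(2)\too\mathfrak{su}(2)$, namely $\langle\tau(\xi),u\rangle_2=\tr(\ad_u\circ P)$.

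**Using bi-invariance.** Suppose $\prs_2$ is bi-invariant. Then for each $u$, $\ad_u$ is skew-symmetric with respect to $\prs_2$. Hence $\tr(\ad_u\circ P)=\tr(\ad_u^*\circ P^*)= -\tr(\ad_u\circ P)$ if $P$ is $\prs_2$-symmetric — wait, more carefully: $\tr(\ad_u\circ P)=\tr((\ad_u\circ P)^{*_2})=\tr(P^{*_2}\circ\ad_u^{*_2})=\tr(P\circ(-\ad_u))=-\tr(\ad_u\circ P)$, since $P=\xi\xi^*$ is self-adjoint for $\prs_2$ (as $\xi^*$ is the $\prs_1$-to-$\prs_2$ adjoint of $\xi$, the composite $\xi\xi^*$ is $\prs_2$-self-adjoint). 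Therefore $\tr(\ad_u\circ P)=0$ for all $u$, i.e.\ $\tau(\xi)=0$ and $\xi$ is harmonic. If instead $\prs_1$ is bi-invariant, I would run the dual computation using $\langle\tau(\xi),u\rangle_2=\tr(\xi^*\ad_u\xi)=\tr((\xi^*\xi)\circ(\xi^{-1}\ad_u\xi))=\tr((\xi^*\xi)\circ\ad_{\xi^{-1}u\,?})$; the cleaner route is to conjugate: because $\xi$ is an automorphism, $\ad_{\xi v}=\xi\ad_v\xi^{-1}$, so $\xi^*\ad_u\xi=\xi^*\xi\,\ad_{\xi^{-1}u}$ is $\prs_1$-self-adjoint composed with a $\prs_1$-skew operator (bi-invariance of $\prs_1$ makes every $\ad_w$ skew for $\prs_1$), hence traceless. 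Either way $\tau(\xi)=0$.

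**Main obstacle.** The only real subtlety is bookkeeping about \emph{which} inner product each adjoint and each skew-symmetry statement refers to: $\xi^*$ is the adjoint relative to the pair $(\prs_1,\prs_2)$, while $\ad_u^*$ in the quoted formula is the $\prs_2$-adjoint, and "bi-invariant" means $\ad_u$ is skew for the relevant metric. The cleanest way to avoid confusion is to treat the two cases ($\prs_2$ bi-invariant, $\prs_1$ bi-invariant) separately and, in the second, first push everything to the source side via the conjugation identity $\ad_{\xi v}=\xi\ad_v\xi^{-1}$. Once $\tau(\xi)=0$ is established, harmonicity is immediate from Proposition~\ref{pr1}; note we get harmonicity directly and need not even invoke Proposition~\ref{test}. (One should also remark that the statement is exactly the specialization of \cite[Proposition 2.5]{boucetta} to $\mathfrak{su}(2)$, so in the paper it suffices to cite that result; the computation above is the content behind it.)
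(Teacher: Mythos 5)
Your argument is correct, but it is not the route the paper takes: the paper disposes of Proposition \ref{bi} in a single line, as a direct consequence of \cite[Proposition 2.5]{boucetta}, with no computation at all. What you have written is essentially a self-contained proof of the content behind that citation, using only the trace formula $\langle\tau(\xi),u\rangle_2=\tr(\xi^*\circ\ad_u\circ\xi)$ (valid here because $\mathfrak{su}(2)$ is unimodular), cyclicity of the trace, and the fact that bi-invariance of a left invariant metric is equivalent to skew-symmetry of every $\ad_u$ for the corresponding inner product. Both halves of your case analysis check out: when $\prs_2$ is bi-invariant, $\tr(\ad_u\circ\xi\circ\xi^*)$ vanishes because $\xi\circ\xi^*$ is $\prs_2$-self-adjoint while $\ad_u$ is $\prs_2$-skew, and this half does not even use that $\xi$ is an automorphism, so it applies to arbitrary homomorphisms into a bi-invariant target; when $\prs_1$ is bi-invariant you do need invertibility, writing $u=\xi w$ and using $\ad_{\xi w}=\xi\circ\ad_w\circ\xi^{-1}$ to reduce to $\tr(\xi^*\xi\,\ad_w)$, the trace of a $\prs_1$-self-adjoint operator composed with a $\prs_1$-skew one, hence zero. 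You are also right that harmonicity then follows directly from Proposition \ref{pr1}, with no need for the test matrix of Proposition \ref{test}. The trade-off is simply brevity versus self-containedness: the paper's citation is shorter, while your computation makes the proposition independent of \cite{boucetta} and makes explicit exactly where unimodularity, bi-invariance and the automorphism hypothesis enter.
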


Any homomorphism of $\mathrm{su(2)}$ is an automorphism and it is a product $\xi_3(a)\circ\xi_2(b)\circ\xi_1(c)$ where
\[ \xi_1(a)=\left(\begin{array}{ccc}1&0&0\\0&\cos(a) &\sin(a)\\
0&-\sin(a)&\cos(a) \end{array}   \right),
\xi_2(a)=\left(\begin{array}{ccc}\cos(a)&0&\sin(a)\\0&1 &0\\
-\sin(a)&0&\cos(a) \end{array}   \right),\xi_3(a)=\left(\begin{array}{ccc}\cos(a)&\sin(a)&0\\-\sin(a)& \cos(a)&0\\0&0&1 \end{array}   \right). \]
		If $\xi_i:(\mathfrak{su}(2),\prs_1)\too(\mathfrak{su}(2),\prs_2)$ with $\prs_j=\mathrm{Diag}(\la_j,\mu_j,\nu_j)$ then
			\[ \begin{cases}\di\tau(\xi_1(a))=-{\frac {\sin \left( a \right) \cos
				\left( a \right)  \left( \mu_{{2}}-\nu_{{2}} \right)  \left( \mu_{{1}
				}-\nu_{{1}} \right) }{\lambda_{{2}}\mu_{{1}}\nu_{{1}}}}X_1
		,\\
		\di\tau_2(\xi_1(a))=-2\,{\frac { \left( \mu_{{2}}-\nu_{{2}}
				\right) ^{2} \left( -\nu_{{1}}+\mu_{{1}} \right) ^{2}\cos \left( a
				\right)  \left(  \left( \cos \left( a \right)  \right) ^{2}-1/2
				\right) \sin \left( a \right) }{{\mu_{{1}}}^{2}{\nu_{{1}}}^{2}{
					\lambda_{{2}}}^{2}}}X_1,\\
			\di\tau(\xi_2(a))={\frac {\sin \left( a \right) \cos
				\left( a \right)  \left( \lambda_{{2}}-\nu_{{2}} \right)  \left( 
				\lambda_{{1}}-\nu_{{1}} \right) }{\mu_{{2}}\lambda_{{1}}\nu_{{1}}}}
		X_2,\\
		\di\tau_2(\xi_2(a))={\frac { \left( 2\, \left( \cos \left( a
				\right)  \right) ^{2}-1 \right) \cos \left( a \right)  \left( \lambda
				_{{2}}-\nu_{{2}} \right) ^{2} \left( -\nu_{{1}}+\lambda_{{1}} \right) 
				^{2}\sin \left( a \right) }{{\lambda_{{1}}}^{2}{\nu_{{1}}}^{2}{\mu_{{2
						}}}^{2}}}X_2,\\
				\di\tau(\xi_3(a))=-{\frac {\cos \left( a \right) \sin
				\left( a \right)  \left( \lambda_{{2}}-\mu_{{2}} \right)  \left( 
				\lambda_{{1}}-\mu_{{1}} \right) }{\lambda_{{1}}\mu_{{1}}\nu_{{2}}}}X_3,\\
		\di\tau_2(\xi_3(a))=-2\,{\frac {\cos \left( a \right) \sin
				\left( a \right)  \left(  \left( \cos \left( a \right)  \right) ^{2}-
				1/2 \right)  \left( \mu_{{2}}-\lambda_{{2}} \right) ^{2} \left( \mu_{{
						1}}-\lambda_{{1}} \right) ^{2}}{{\lambda_{{1}}}^{2}{\mu_{{1}}}^{2}{\nu
					_{{2}}}^{2}}}X_3.
			\end{cases} \]So we get:
			\begin{pr}\label{hsu2}\begin{enumerate}\item If $\mu_2=\nu_2$ or $\mu_1=\nu_1$ then $\xi_1(a)$ is harmonic.
			\item If $\mu_2\not=\nu_2$ and $\mu_1\not=\nu_1$ then $\xi_1(a)$ is harmonic if and only if $\sin(2a)=0$ and 
			$\xi_1(a)$ is biharmonic not harmonic if and only if $\cos(a)^2=\frac12$.		
			\item If $\la_2=\nu_2$ or $\la_1=\nu_1$ then $\xi_2(a)$ is harmonic.
			\item If $\la_2\not=\nu_2$ and $\la_1\not=\nu_1$ then $\xi_2(a)$ is harmonic if and only if $\sin(2a)=0$ and 
			$\xi_2(a)$ is biharmonic not harmonic if and only if $\cos(a)^2=\frac12$.
			\item If $\la_2=\mu_2$ or $\la_1=\mu_1$ then $\xi_3(a)$ is harmonic.
			\item If $\la_2\not=\mu_2$ and $\la_1\not=\mu_1$ then $\xi_3(a)$ is harmonic if and only if $\sin(2a)$ and 
			$\xi_3(a)$ is biharmonic not harmonic if and only if $\cos(a)^2=\frac12$.			
					\end{enumerate}
				
			\end{pr}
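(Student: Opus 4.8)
The plan is to read the statement directly off the six displayed formulas for $\tau(\xi_i(a))$ and $\tau_2(\xi_i(a))$, $i=1,2,3$; these formulas are the only computational input, and they follow from Proposition \ref{pr1} together with the description of every automorphism of $\mathfrak{su}(2)$ as a product $\xi_3(a)\circ\xi_2(b)\circ\xi_1(c)$ and the bracket relations of $\mathfrak{su}(2)$. I would treat the three one-parameter factors $\xi_1(a)$, $\xi_2(a)$, $\xi_3(a)$ in turn; the three arguments are identical up to permuting $(\lambda,\mu,\nu)$ and $(X_1,X_2,X_3)$, so I describe $\xi_1(a)$ only.

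First, from the displayed formulas, $\tau(\xi_1(a))$ is a multiple of $X_1$ whose coefficient equals, up to a nonzero constant depending only on $\lambda_2,\mu_1,\nu_1$, the product $\sin(a)\cos(a)(\mu_2-\nu_2)(\mu_1-\nu_1)$, and $\tau_2(\xi_1(a))$ is a multiple of $X_1$ whose coefficient equals, up to a nonzero constant depending only on $\lambda_2,\mu_1,\nu_1$, the product $(\mu_2-\nu_2)^2(\mu_1-\nu_1)^2\cos(a)\sin(a)\bigl(\cos^2(a)-\tfrac12\bigr)$. Hence if $\mu_2=\nu_2$ or $\mu_1=\nu_1$ then $\tau(\xi_1(a))=0$, so $\xi_1(a)$ is harmonic by Proposition \ref{pr1}; this is item (1), and items (3) and (5) are the same computation applied to $\xi_2(a)$ and $\xi_3(a)$. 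If instead $\mu_2\neq\nu_2$ and $\mu_1\neq\nu_1$, the metric factors are nonzero, so $\tau(\xi_1(a))=0$ iff $\sin(2a)=0$, while $\tau_2(\xi_1(a))=0$ iff $\cos(a)\sin(a)\bigl(\cos^2(a)-\tfrac12\bigr)=0$.

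Second, to isolate the biharmonic-not-harmonic case I would argue: if $\cos^2(a)=\tfrac12$ then $\sin(a)\cos(a)=\pm\tfrac12\neq0$, so $\tau_2(\xi_1(a))=0$ while $\tau(\xi_1(a))\neq0$, i.e.\ $\xi_1(a)$ is biharmonic and not harmonic; conversely, a biharmonic non-harmonic $\xi_1(a)$ has $\sin(a)\cos(a)\neq0$, and then $\tau_2(\xi_1(a))=0$ forces $\cos^2(a)=\tfrac12$. This establishes item (2), and items (4) and (6) follow verbatim after replacing $(\mu_2,\nu_2,\mu_1,\nu_1,X_1)$ by $(\lambda_2,\nu_2,\lambda_1,\nu_1,X_2)$ and by $(\lambda_2,\mu_2,\lambda_1,\mu_1,X_3)$ respectively. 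I do not expect a genuine obstacle here: once the six formulas are in hand the proposition is a one-line case analysis, so the only step requiring care is the verification of those formulas — in particular the factor $\cos^2(a)-\tfrac12$ appearing in each $\tau_2$ expression, on which the very existence of biharmonic non-harmonic automorphisms of $\mathrm{SU}(2)$ rests.
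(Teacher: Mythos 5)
Your proposal is correct and follows essentially the same route as the paper, which simply displays the Maple-computed formulas for $\tau(\xi_i(a))$ and $\tau_2(\xi_i(a))$ and reads the proposition off them ("So we get:"); your case analysis, including the observation that $\cos^2(a)=\tfrac12$ forces $\sin(a)\cos(a)\neq0$, is exactly the implicit argument there. No discrepancy to report.
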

			
			\begin{theo}\label{theohsu2} We consider the automorphism $$\xi=\xi_3(a)\circ\xi_2(b)\circ\xi_1(c):
				\left(\mathfrak{su}(2),\mathrm{diag}(\lambda_1,\mu_1,\nu_1)\right)\too\left(\mathfrak{su}(2),\mathrm{diag}(\lambda_2,\mu_2,\nu_2)\right),\;0\leq\nu_i<\mu_i\leq\la_i,i=1,2.
				$$
				\begin{enumerate}\item If $\left(0<\nu_1<\mu_1<\la_1,0<\nu_2<\mu_2<\la_2\right)$  or $\left(0<\nu_1<\mu_1<\la_1,0<\nu_2<\mu_2<\la_2\right)$  then $\xi$
				   is harmonic if and only if one of the following condition holds:
				\begin{enumerate}\item[$(i)$] $\cos(b)=0$, $\sin(b)=1$ and $\sin(2(a-c))=0$,
					\item[$(ii)$] $\cos(b)=0$, $\sin(b)=-1$ and $\sin(2(a+c))=0$,
					\item[$(iii)$] $\sin(b)=0$ and $\sin(2c)=\sin(2a)=0$.
					
					\end{enumerate}
		\item	If $\left(0<\nu_1<\mu_1<\la_1,0<\nu_2=\mu_2<\la_2\right)$ then $\xi$
		is harmonic if and only if one of the following condition holds:
		\begin{enumerate}
			\item[$(i)$] $\cos(b)=0$, $\sin(b)=1$ and $\sin(2(a-c))=0$,
				\item[$(ii)$] $\cos(b)=0$, $\sin(b)=-1$ and $\sin(2(a+c))=0$,
				\item[$(iii)$] $\sin(b)=0$ and $\sin(a)=0$.
				\item[$(iv)$] $\sin(b)=0$, $\sin(2c)=0$ and $\cos(a)=0$.

			\end{enumerate}
		
		\item If $\left(0<\nu_1<\mu_1=\la_1,0<\nu_2<\mu_2<\la_2\right)$	 then $\xi$
		is harmonic if and only if one of the following condition holds:
		\begin{enumerate}
			\item[$(i)$] $\cos(b)=0$, $\sin(b)=1$ and $\sin(2(a-c))=0$,
			\item[$(ii)$] $\cos(b)=0$, $\sin(b)=-1$ and $\sin(2(a+c))=0$,
			\item[$(iii)$] $\sin(b)=0$ and $\sin(a)=\sin(2c)=0$.
			\item[$(iv)$] $\sin(b)=0$ and $\cos(a)=\sin(2c)=0$.

		\end{enumerate}	
					
					\item  If $\left(0<\nu_1<\mu_1<\la_1,0<\nu_2<\mu_2=\la_2\right)$ then $\xi$ is harmonic if and only if $\cos(b)=0$ or $(\sin(b)=\sin(2c)=0)$.
					
				\item   If $\left(0<\nu_1=\mu_1<\la_1,0<\nu_2<\mu_2<\la_2\right)$  then $\xi$ is harmonic if and only if $\cos(b)=0$ or $(\sin(b)=\sin(2a)=0)$.
				\item   If $\left(0<\nu_1=\mu_1<\la_1,0<\nu_2=\mu_2<\la_2\right)$  then $\xi$ is harmonic if and only if $(\cos(b)=0)$, $(\cos(a))=0$ or $(\sin(b)=\sin(a)=0)$.
				\item   If $\left(0<\nu_1=\mu_1<\la_1,0<\nu_2<\mu_2=\la_2\right)$  then $\xi$ is harmonic if and only if $\sin(2b)=0$.
				\item   If $\left(0<\nu_1<\mu_1=\la_1,0<\nu_2<\mu_2=\la_2\right)$  then $\xi$ is harmonic if and only  if $\cos(b)\cos(c)=0$ or $(\sin(b)=\sin(c)=0)$.
				
				\item   If $\left(0<\nu_1<\mu_1=\la_1,0<\nu_2=\mu_2<\la_2\right)$  then $\xi$ is harmonic if and only  if one of the following situations holds
				\begin{enumerate}\item[$(i)$] $\cos(b)=0$, $\sin(b)=1$ and $\sin(2(a-c))=0$,
					\item[$(ii)$] $\cos(b)=0$, $\sin(b)=-1$ and $\sin(2(a+c))=0$,
					\item[$(iii)$] $\cos(c)=0$ and $\sin(2a)=0$,
					\item[$(iv)$] $\sin(b)=\sin(c)=0$,
					\item[$(v)$] $\cos(a)=(-1)^k\frac{\sin(c)}{\sqrt{\sin^2(c)+\sin^2(b)\cos^2(c)}}$ and $\sin(a)=(-1)^{k+1}\frac{\sin(b)\cos(c)}{\sqrt{\sin^2(c)+\sin^2(b)\cos^2(c)}}$.
					\end{enumerate}
				\end{enumerate}
				
				\end{theo}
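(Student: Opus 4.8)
The plan is to turn harmonicity into a single commutator identity on the matrix of $\xi$ and then dispose of the nine multiplicity patterns one at a time. By Table \ref{1}, up to conjugation a homomorphism $\xi:(\mathfrak{su}(2),\prs_1)\too(\mathfrak{su}(2),\prs_2)$ is an automorphism which in the basis $\B_0$ is the rotation $\xi=\xi_3(a)\xi_2(b)\xi_1(c)\in\mathrm{SO}(3)$, while $\mathrm{Mat}(\prs_i,\B_0)=P_i:=\mathrm{diag}(\lambda_i,\mu_i,\nu_i)$. First I would rephrase harmonicity: since $\mathfrak{su}(2)$ is unimodular, $\tau(\xi)=U^\xi$ and $\langle\tau(\xi),u\rangle_2=\tr(\xi^*\circ\ad_u\circ\xi)$ by \cite[Proposition 2.4]{boucetta}; writing $\xi^*=P_1^{-1}\xi^{T}P_2$ and using cyclicity of the trace, $\langle\tau(\xi),u\rangle_2=\tr\big(QP_2\,\ad_u\big)$ with $Q:=\xi P_1^{-1}\xi^{T}$. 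Since $\ad_{X_1},\ad_{X_2},\ad_{X_3}$ span the space of skew-symmetric $3\times3$ matrices, this vanishes for every $u$ if and only if $QP_2$ is symmetric, that is
\[ \xi \text{ is harmonic } \Longleftrightarrow\ [\,\xi P_1^{-1}\xi^{T}\,,\,P_2\,]=0 . \]
Here $\xi P_1^{-1}\xi^{T}$ is symmetric positive definite and $\mathrm{SO}(3)$-conjugate to $P_1^{-1}$, hence has eigenvalues $\{\lambda_1^{-1},\mu_1^{-1},\nu_1^{-1}\}$; so harmonicity just says that $\xi P_1^{-1}\xi^{T}$ commutes with the diagonal matrix $P_2$. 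In particular this recovers Proposition \ref{bi} ($P_2$ scalar gives automatic harmonicity) and is consistent with the one-parameter formulas of Proposition \ref{hsu2}.

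Next I would compute $Q=\xi_3(a)\xi_2(b)\big(\xi_1(c)P_1^{-1}\xi_1(-c)\big)\xi_2(-b)\xi_3(-a)$ explicitly (the step delegated to Maple): $\xi_1(c)P_1^{-1}\xi_1(-c)$ is block diagonal, leaving the $\lambda_1^{-1}$ entry fixed, and the successive conjugations by $\xi_2(b)$ and $\xi_3(a)$ produce the three off-diagonal entries $Q_{12},Q_{13},Q_{23}$ as trigonometric polynomials in $a,b,c$ with coefficients rational in $\lambda_i^{-1},\mu_i^{-1},\nu_i^{-1}$. The condition $[Q,P_2]=0$ then reads $(\lambda_2-\mu_2)Q_{12}=0$, $(\lambda_2-\nu_2)Q_{13}=0$, $(\mu_2-\nu_2)Q_{23}=0$. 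I would now split according to which of $\lambda_2,\mu_2,\nu_2$ coincide (all distinct $\Rightarrow$ all three of $Q_{12},Q_{13},Q_{23}$ must vanish; one repeated eigenvalue $\Rightarrow$ only two of them) and, inside each such case, according to which of $\lambda_1,\mu_1,\nu_1$ coincide, which factors or kills coefficients in $Q$. The round source or target is trivial by Proposition \ref{bi}, leaving exactly the nine combinations of items (1)--(9). Solving the resulting small trigonometric systems yields, case by case, the stated conditions. To cut the bookkeeping I would also exploit the discrete symmetries $\xi\mapsto\xi^{-1}$ (which interchanges the roles of $\prs_1$ and $\prs_2$, as $\xi^{-1}=\xi^{T}$), $(a,b,c)\mapsto(-a,-b,-c)$, and the $\pi$-translations in $a,b,c$.

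The routine-but-bulky part is this last case analysis; the genuine obstacle lies in the "doubly degenerate" cases --- items (6)--(9), and most acutely item (9), where the source is round in the $(X_1,X_2)$-plane and the target in the $(X_2,X_3)$-plane. There $[Q,P_2]=0$ reduces only to block-diagonality of $Q$, and $Q$ simultaneously carries a repeated eigenvalue, so the solution set is no longer cut out by the vanishing of a single factor $\sin(\cdot)$ or $\cos(\cdot)$: one is left with a genuinely coupled pair of equations in $a,b,c$, whose resolution produces the extra one-parameter family $\cos a=(-1)^k\sin c/\sqrt{\sin^2 c+\sin^2 b\cos^2 c}$, $\sin a=(-1)^{k+1}\sin b\cos c/\sqrt{\sin^2 c+\sin^2 b\cos^2 c}$ of item (9)(v). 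Verifying that this family is exactly the remaining solution locus, and that it is not already contained in the other branches, is where care is needed; the other cases follow the same template with strictly easier algebra.
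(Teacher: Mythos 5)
Your reformulation is correct and organizes the proof differently from the paper. The paper computes $\tau(\xi)$ directly (by Maple) from the Levi-Civita product, obtaining coefficients of the shape $\cos(b)(\mu_2-\nu_2)R$, $(\lambda_2-\nu_2)\cos(b)S$ and $(\lambda_2-\mu_2)z$, and then solves the resulting trigonometric system with the help of the rotation identities \eqref{smart} and the specialization \eqref{cosb}; you instead start from the trace formula of \cite[Proposition 2.4]{boucetta} and observe that, since the $\ad_{X_i}$ span the skew-symmetric $3\times3$ matrices, harmonicity is equivalent to $[\xi P_1^{-1}\xi^{T},P_2]=0$, i.e.\ to the commutation of the transported metric $Q=\xi P_1^{-1}\xi^{T}$ with $P_2$. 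This is genuinely the same system of equations (your $(\mu_2-\nu_2)Q_{23}$, $(\lambda_2-\nu_2)Q_{13}$, $(\lambda_2-\mu_2)Q_{12}$ are, up to nonzero factors, the paper's $A_1,A_2,A_3$), but your derivation explains a priori the eigenvalue-difference factors, recovers Proposition \ref{bi} without computation, and makes the split according to coincidences among $\lambda_2,\mu_2,\nu_2$ (and then $\lambda_1,\mu_1,\nu_1$) transparent; the inverse/transpose symmetry you invoke is also valid and is a real economy the paper does not exploit. What your write-up does not yet contain is the actual resolution of the trigonometric systems that produces the lists (i)--(v): in particular in item (9), which you rightly identify as the crux, the paper needs the explicit phase-shift step $R_1=\sin(a+\alpha)$ with $\cos\alpha=\sin c/\sqrt{\sin^2 c+\sin^2 b\cos^2 c}$, followed by substitution of the resulting $(\cos a,\sin a)$ into $S_1$ to check that the second equation then holds identically; your commutator criterion does not by itself dissolve that coupled system, and the analogues of \eqref{smart} and \eqref{cosb} (which eliminate $a$ when $\cos b\neq0$, resp.\ reduce $z$ to $\sin(2(c\mp a))$ when $\cos b=0$) still have to be supplied to certify each item. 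So the plan is sound and arguably cleaner than the paper's, but it is complete only modulo carrying out that case-by-case trigonometric endgame, which is exactly the Maple-assisted work the paper performs.
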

				
				\begin{proof}  We have{\small
						\[ \begin{cases}\di\tau(\xi)=A_1X_1+A_2X_2+A_3X_3,\\
						\di	\lambda_{{2}}\lambda_{{1}}\mu_{{1}}\nu_{{1}}A_1=\cos \left( b \right)  \left( \sin \left( a \right) \sin \left( b
						\right) \lambda_{{1}} \left( \mu_{{1}}-\nu_{{1}} \right)  \left( \cos
						\left( c \right)  \right) ^{2}-\sin \left( c \right) \lambda_{{1}}
						\cos \left( a \right)  \left( \mu_{{1}}-\nu_{{1}} \right) \cos \left( 
						c \right) +\sin \left( a \right) \sin \left( b \right) \nu_{{1}}
						\left( \lambda_{{1}}-\mu_{{1}} \right)  \right)  \left( \mu_{{2}}-\nu
						_{{2}} \right)\\
						=\cos(b)\left( \mu_{{2}}-\nu
						_{{2}} \right) R ,\\
						\di \mu_{{2}}\lambda_{{1}}\mu_{{1}}\nu_{{1}}A_2
						=\cos \left( b \right)  \left( \sin \left( b \right)  \left( \lambda_{{
								1}} \left( \mu_{{1}}-\nu_{{1}} \right)  \left( \cos \left( c \right) 
						\right) ^{2}+\nu_{{1}} \left( \lambda_{{1}}-\mu_{{1}} \right) 
						\right) \cos \left( a \right) +\cos \left( c \right) \sin \left( a
						\right) \sin \left( c \right) \lambda_{{1}} \left( \mu_{{1}}-\nu_{{1}
						} \right)  \right)  \left( \lambda_{{2}}-\nu_{{2}} \right)\\ 
						=\left( \lambda_{{2}}-\nu_{{2}} \right)\cos(b)S,\\
						\di	z:=-\nu_{{2}}\lambda_{{1}}\mu_{{1}}\nu_{{1}}A_3=
						\left( \lambda_{{2}}-\mu_{{2}} \right)  \left( 2\,\cos \left( c
						\right) \sin \left( b \right) \sin \left( c \right) \lambda_{{1}}
						\left( \mu_{{1}}-\nu_{{1}} \right)  \left( \cos \left( a \right) 
						\right) ^{2}\right.\\\left.+ \left( \lambda_{{1}} \left(  \left( \cos \left( b
						\right)  \right) ^{2}-2 \right)  \left( \mu_{{1}}-\nu_{{1}} \right) 
						\left( \cos \left( c \right)  \right) ^{2}+\nu_{{1}} \left( \lambda_{
							{1}}-\mu_{{1}} \right)  \left( \cos \left( b \right)  \right) ^{2}+
						\lambda_{{1}} \left( \mu_{{1}}-\nu_{{1}} \right)  \right) \sin \left( 
						a \right) \cos \left( a \right) -\cos \left( c \right) \sin \left( b
						\right) \sin \left( c \right) \lambda_{{1}} \left( \mu_{{1}}-\nu_{{1}
						} \right)  \right). 
						\end{cases} \]}	
					On the other hand, the following relations are straightforward to establish:
					\begin{equation}\label{smart} \begin{cases}R\cos(a)-S\sin(a)=-   \lambda_{{1}} \left( \mu_{{1}}-\nu_{{1}} \right) \sin \left( c \right) \cos \left( 
					c \right),\\
					R\sin(a)+S\cos(a)= \sin \left( b \right)   \left( \lambda_{{1}
					} \left( \mu_{{1}}-\nu_{{1}} \right)  \left( \cos \left( c \right) 
					\right) ^{2}+\nu_{{1}} \left( \lambda_{{1}}-\mu_{{1}} \right) 
					\right) 
					\end{cases} \end{equation}and if $\cos(b)=0$ then
					\begin{equation}\label{cosb} z=\begin{cases}\frac12\sin(2(c-a))\left( 
					\lambda_{{2}}-\mu_{{2}} \right) \lambda_{{1}} \left( \mu_{{1}}-\nu_{{1
						}} \right)\quad\mbox{if}\quad\sin(b)=1,\\
						\frac12\sin(2(c+a))\left( 
						\lambda_{{2}}-\mu_{{2}} \right) \lambda_{{1}} \left( \mu_{{1}}-\nu_{{1
							}} \right)\quad\mbox{if}\quad\sin(b)=-1.
							\end{cases} \end{equation}
					 Suppose that  $\left(0<\nu_1<\mu_1<\la_1,0<\nu_2<\mu_2<\la_2\right)$. Then
						$\xi$ is harmonic if and only if 
						\[ R\cos(b)=S\cos(b)=z=0. \] 
						We distinguish two cases:
						
						$\bullet$ $\cos(b)=0$. Then $\xi$ is is harmonic if and only if $z=0$
						and, by virtue of \eqref{cosb}, we get the desired result.
										
										$\bullet$ $\cos(b)\not=0$ then from \eqref{smart} $\sin(b)=0$ 	and $\sin(c)\cos(c)=0$ and one can check easily that $\xi$ is harmonic if and only if $\cos(a)\sin(a)=0$.

			Except the last case, all the other cases can be deduced in the same way. Let us complete the proof by treating the last case. We suppose that $\left(0<\nu_1<\mu_1=\la_1,0<\nu_2=\mu_2<\la_2\right)$. Then
				\[ \begin{cases}\di\tau(\xi)=
	{\frac {\cos \left( b \right) \cos
			\left( c \right)  \left( \lambda_{{1}}-\nu_{{1}} \right)    \left( \lambda_{{2}}
			-\mu_{{2}} \right) R_1 }{\mu_{{2}}\lambda_{{1}}\nu_{{1}}}}X_2-\,{\frac {2
			\left( \lambda_{{1}}-\nu_{{1}} \right)  \left( \lambda_{{2}}-\mu_{{2}
			} \right)  S_1 }{\mu_{{2}}\lambda_{{
					1}}\nu_{{1}}}}X_3,\\
	\di R_1= \sin
	\left( a \right) \sin \left( c \right) +\cos \left( a \right) \sin
	\left( b \right) \cos \left( c \right)  ,\\
	\di S_1= \sin \left( b \right)  \left( \cos \left( a \right) 
	\right) ^{2}\sin \left( c \right) \cos \left( c \right) +\frac12\,\sin
	\left( a \right)  \left( 1+ \left(  \left( \cos \left( b \right) 
	\right) ^{2}-2 \right)  \left( \cos \left( c \right)  \right) ^{2}
	\right) \cos \left( a \right) -\frac12\,\sin \left( b \right) \sin
	\left( c \right) \cos \left( c \right)  .					\end{cases} \]
	
	If $\cos(b)=0$ then 
	\[ S_1=\begin{cases}\frac14\sin(2(c-a))\quad\mbox{if}\quad\sin(b)=1,\\-\frac14\sin(2(c+a))\quad\mbox{if}\quad\sin(b)=-1\end{cases} \]
	and we get $(i)$ and $(ii)$.
	
	If $\cos(c)=0$ then $S_1=\frac14\sin(2a)$ and we get $(iii)$.
	
	If $\sin(b)=\sin(c)=0$ the $S_1=R_1=0$ and hence $\xi$ is harmonic.
	
	Suppose now that $\cos(b)\not=0$,  $\cos(c)\not=0$ and $(\sin(b),\sin(c))\not=(0,0)$. Then $\xi$ is harmonic if and only if $R_1=S_1=0$. We have
	\[ R_1=\sin(a)\frac{\sin(c)}{\sqrt{\sin^2(c)+\sin^2(b)\cos^2(c)}}+\cos(a)\frac{\sin(b)\cos(c)}{\sqrt{\sin^2(c)+\sin^2(b)\cos^2(c)}}=\sin(a+\al) \]where
	\[ \cos(\al)=\frac{\sin(c)}{\sqrt{\sin^2(c)+\sin^2(b)\cos^2(c)}}\esp \sin(\al)=
	\frac{\sin(b)\cos(c)}{\sqrt{\sin^2(c)+\sin^2(b)\cos^2(c)}}. \]
	So $R_1=0$ if and only if $a+\al=k\pi$ where $k\in\Z$. Thus
	\[ \cos(a)=(-1)^k\cos(\al)=(-1)^k\frac{\sin(c)}{\sqrt{\sin^2(c)+\sin^2(b)\cos^2(c)}}\esp \sin(a)=-(-1)^k\sin(\al)=(-1)^{k+1}\frac{\sin(b)\cos(c)}{\sqrt{\sin^2(c)+\sin^2(b)\cos^2(c)}}. \]If we replace $\cos(a)$ and $\sin(a)$ in $S_1$, we get $S_1=0$ which completes the proof.
								\end{proof}

			The situation for biharmonic homomorphisms is more complicated. We have the following non trivial biharmonic homomorphism which is not harmonic.
			
			\begin{exem} The homomorphism $\xi=\xi_3(a)\circ\xi_2(b)\circ\xi_1(c):
				\left(\mathfrak{su}(2),\mathrm{diag}(\lambda_1,\mu_1,\nu_1)\right)\too\left(\mathfrak{su}(2),\mathrm{diag}(\lambda_2,\mu_2,\nu_2)\right)$ is biharmonic  not harmonic if
				\[ \mu_1=\nu_1,\mu_2=\nu_2\esp \cos(a)=\cos(b)=\left(\frac12  \right)^{\frac14}. \]
				
				\end{exem}

			\section{Harmonic and biharmonic homomorphisms of $\mathrm{sl}(2,\R)$}\label{section7}	
			Any homomorphism of $\mathrm{sl}(2,\R)$ is an automorphism and it is a product $\xi_3(a)\circ\xi_2(b)\circ\xi_1(c)$ where
			\[ \xi_1(a)=\left(\begin{array}{ccc}1&0&0\\0&\cosh(a) &\sinh(a)\\
			0&\sinh(a)&\cosh(a) \end{array}   \right),
			\xi_2(a)=\left(\begin{array}{ccc}\cosh(a)&0&\sinh(a)\\0&1 &0\\
			\sinh(a)&0&\cosh(a) \end{array}   \right),\xi_3(a)=\left(\begin{array}{ccc}\cos(a)&\sin(a)&0\\-\sin(a)& \cos(a)&0\\0&0&1 \end{array}   \right). \]
			If $\xi_i:(\mathfrak{su}(2),\prs_1)\too(\mathfrak{su}(2),\prs_2)$ with $\prs_j=\mathrm{Diag}(\la_j,\mu_j,\nu_j)$ then
			\[ \begin{cases}\di\tau(\xi_1(a))=-{\frac {\cosh \left( a \right) \sinh
					\left( a \right)  \left( \mu_{{2}}+\nu_{{2}} \right)  \left( \nu_{{1}
					}+\mu_{{1}} \right) }{\lambda_{{2}}\mu_{{1}}\nu_{{1}}}}X_1,
			\\
			\di\tau_2(\xi_1(a))=-2\,{\frac { \left( \mu_{{2}}+\nu_{{2}}
					\right) ^{2} \left(  \left( \cosh \left( a \right)  \right) ^{2}-1/2
					\right)  \left( \nu_{{1}}+\mu_{{1}} \right) ^{2}\cosh \left( a
					\right) \sinh \left( a \right) }{{\mu_{{1}}}^{2}{\nu_{{1}}}^{2}{
						\lambda_{{2}}}^{2}}}X_1
			\\
			\di\tau(\xi_2(a))={\frac {\cosh \left( a \right) \sinh
					\left( a \right)  \left( \lambda_{{2}}+\nu_{{2}} \right)  \left( \nu_
					{{1}}+\lambda_{{1}} \right) }{\mu_{{2}}\lambda_{{1}}\nu_{{1}}}}X_2,
			\\
			\di\tau_2(\xi_2(a))={\frac { \left( 2\, \left( \cosh \left( 
					a \right)  \right) ^{2}-1 \right) \cosh \left( a \right)  \left( 
					\lambda_{{2}}+\nu_{{2}} \right) ^{2} \left( \nu_{{1}}+\lambda_{{1}}
					\right) ^{2}\sinh \left( a \right) }{{\lambda_{{1}}}^{2}{\nu_{{1}}}^{
						2}{\mu_{{2}}}^{2}}}X_2,
			\\
			\di\tau(\xi_3(a))=-{\frac {\sin \left( a \right) \cos
					\left( a \right)  \left( \lambda_{{2}}-\mu_{{2}} \right)  \left( -\mu
					_{{1}}+\lambda_{{1}} \right) }{\nu_{{2}}\lambda_{{1}}\mu_{{1}}}}X_3,
			\\
			\di\tau_2(\xi_3(a))=-2\,{\frac {\sin \left( a \right) \cos
					\left( a \right)  \left( -\lambda_{{2}}+\mu_{{2}} \right) ^{2}
					\left( \mu_{{1}}-\lambda_{{1}} \right) ^{2} \left(  \left( \cos
					\left( a \right)  \right) ^{2}-1/2 \right) }{{\lambda_{{1}}}^{2}{\mu_
						{{1}}}^{2}{\nu_{{2}}}^{2}}}X_3.
			\\
			\end{cases} \]
			
		So we get:
		\begin{pr}\label{hsl2}\begin{enumerate}
				\item  $\xi_1(a)$ is biharmonic if and only if it is harmonic if only if $a=0$, i.e., $\xi_1=\mathrm{Id}$.
				\item  $\xi_2(a)$ is biharmonic if and only if it is harmonic if only if $a=0$, i.e., $\xi_2=\mathrm{Id}$.
				\item If $\la_2=\mu_2$ or $\la_1=\mu_1$ then $\xi_3(a)$ is harmonic.
				\item If $\la_2\not=\mu_2$ and $\la_1\not=\mu_1$ then $\xi_3(a)$ is harmonic if and only if $(\sin(2a)=0)$ and 
				$\xi_3(a)$ is biharmonic not harmonic if and only if $\cos(a)^2=\frac12$.			
			\end{enumerate}
			
		\end{pr}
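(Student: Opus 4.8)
The plan is to read everything directly off the closed forms for $\tau(\xi_i(a))$ and $\tau_2(\xi_i(a))$ displayed immediately above, using only that in each metric $\mathrm{Diag}(\la_j,\mu_j,\nu_j)$ occurring here one has $\la_j,\mu_j,\nu_j>0$ by Table~\ref{1}. Thus the whole proof is an elementary sign inspection.

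First I would treat $\xi_1(a)$. The tension field $\tau(\xi_1(a))$ is a nonzero constant multiple of $\cosh(a)\sinh(a)\,(\mu_2+\nu_2)(\nu_1+\mu_1)\,X_1$; since $\mu_2+\nu_2>0$, $\nu_1+\mu_1>0$ and $\cosh(a)\geq1$, it vanishes exactly when $\sinh(a)=0$, i.e.\ when $a=0$ (so $\xi_1=\mathrm{Id}$). Likewise $\tau_2(\xi_1(a))$ is a nonzero constant multiple of $\cosh(a)\sinh(a)\bigl(\cosh^2(a)-\tfrac12\bigr)(\mu_2+\nu_2)^2(\nu_1+\mu_1)^2\,X_1$, and since $\cosh^2(a)-\tfrac12\geq\tfrac12>0$ this again vanishes exactly when $\sinh(a)=0$. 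Hence $\xi_1(a)$ is harmonic iff biharmonic iff $a=0$. Replacing the pair $(\mu,\nu)$ by $(\lambda,\nu)$ gives word for word the same conclusion for $\xi_2(a)$.

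Next I would treat $\xi_3(a)$. Here $\tau(\xi_3(a))$ is a nonzero constant multiple of $\sin(a)\cos(a)\,(\lambda_2-\mu_2)(\lambda_1-\mu_1)\,X_3$ and $\tau_2(\xi_3(a))$ a nonzero constant multiple of $\sin(a)\cos(a)\bigl(\cos^2(a)-\tfrac12\bigr)(\lambda_2-\mu_2)^2(\lambda_1-\mu_1)^2\,X_3$. If $\lambda_2=\mu_2$ or $\lambda_1=\mu_1$ then both fields vanish identically, so $\xi_3(a)$ is harmonic; this is item~(3). If $\lambda_2\neq\mu_2$ and $\lambda_1\neq\mu_1$, then $\tau(\xi_3(a))=0$ iff $\sin(a)\cos(a)=0$ iff $\sin(2a)=0$, and $\tau_2(\xi_3(a))=0$ iff $\sin(2a)=0$ or $\cos^2(a)=\tfrac12$. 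To finish item~(4) I would note that $\cos^2(a)=\tfrac12$ forces $\sin^2(a)=\tfrac12$, hence $\sin(2a)=2\sin(a)\cos(a)=\pm1\neq0$; thus the maps with $\cos^2(a)=\tfrac12$ are exactly the biharmonic ones that fail to be harmonic.

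I do not expect any genuine obstacle: the substantive part — the Maple computation of the closed forms for $\tau(\xi_i(a))$ and $\tau_2(\xi_i(a))$ — precedes the statement, and what remains is the positivity bookkeeping above, the only point needing a word being the harmless observation that $\cos^2(a)=\tfrac12$ and $\sin(2a)=0$ are incompatible.
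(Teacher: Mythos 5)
Your proof is correct and coincides with the paper's own treatment: the paper simply displays the Maple-computed formulas for $\tau(\xi_i(a))$ and $\tau_2(\xi_i(a))$ and reads the proposition off them ("So we get:"), which is exactly your sign-inspection argument using $\la_j,\mu_j,\nu_j>0$, $\cosh^2(a)-\tfrac12>0$, and the incompatibility of $\cos^2(a)=\tfrac12$ with $\sin(2a)=0$.
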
	
				
	\begin{theo}\label{thesl2} The automorphism
		\[ \xi=\xi_3(a)\circ\xi_2(b)\circ\xi_1(c):(\mathrm{sl}(2,\R),\mathrm{diag}(\la_1,\mu_1,\nu_1)\too (\mathrm{sl}(2,\R),\mathrm{diag}(\la_2,\mu_2,\nu_2), 0<\la_i\leq\mu_i,\nu_i>0 \]is harmonic if and only if $\xi_2(b)=\xi_1(c)=\mathrm{Id}_{\mathrm{sl}(2,\R)}$ and $\xi_3(a)$ is harmonic.
		
		\end{theo}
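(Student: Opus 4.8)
The plan is to avoid expanding $\tau(\xi)$ directly for the full product $\xi_3(a)\circ\xi_2(b)\circ\xi_1(c)$, and instead to work with the operator $P:=\xi\circ\xi^{*}$, whose off-diagonal entries encode $\tau(\xi)$ transparently, and then to use the triangular factorisation of $\xi$ to force $b=c=0$.

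Since $\mathrm{sl}(2,\R)$ is unimodular we have $U^{\G}=0$, so $\tau(\xi)=U^{\xi}$, and by the formula recalled above (from \cite[Proposition 2.4]{boucetta})
\[ \langle\tau(\xi),u\rangle_{2}=\tr(\xi^{*}\circ\ad_{u}\circ\xi)=\tr(\ad_{u}\circ P),\qquad P:=\xi\circ\xi^{*}. \]
Identify every operator with its matrix in $\B_0=(X_1,X_2,X_3)$ and write $g_i=\mathrm{Mat}(\prs_i,\B_0)=\mathrm{Diag}(\la_i,\mu_i,\nu_i)$; then $\xi^{*}=g_1^{-1}\xi^{\top}g_2$, so $P=\xi g_1^{-1}\xi^{\top}g_2$, and $P$ is $\prs_2$-self-adjoint, i.e. $g_2P=P^{\top}g_2$, which gives $\mu_2P_{23}=\nu_2P_{32}$, $\la_2P_{13}=\nu_2P_{31}$, $\la_2P_{12}=\mu_2P_{21}$. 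Using the matrices of $\ad_{X_1},\ad_{X_2},\ad_{X_3}$ in $\B_0$ (immediate from the bracket relations of $\mathrm{sl}(2,\R)$), a direct computation of the traces yields
\[ \langle\tau(\xi),X_1\rangle_{2}=-(P_{23}+P_{32}),\qquad \langle\tau(\xi),X_2\rangle_{2}=P_{13}+P_{31},\qquad \langle\tau(\xi),X_3\rangle_{2}=P_{12}-P_{21}. \]
Hence, if $\tau(\xi)=0$, then combining these with the self-adjointness relations and the positivity $\mu_2+\nu_2>0$, $\la_2+\nu_2>0$ forces $P_{13}=P_{31}=P_{23}=P_{32}=0$; in other words $X_3$ is an eigenvector of $P$. (The remaining equation reduces to $(\la_2-\mu_2)P_{12}=0$, which will be seen to be exactly the condition characterising harmonicity of $\xi_3(a)$.)

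Next I would exploit the shape of $\xi$. Write $\xi=R\,M$ with $R:=\xi_3(a)$ and $M:=\xi_2(b)\circ\xi_1(c)$. The matrices $\xi_1(c)$ and $\xi_2(b)$ are symmetric and $R$ is orthogonal with $R^{\top}X_3=X_3$, so $P=R\,Q\,R^{\top}g_2$ with $Q:=Mg_1^{-1}M^{\top}$ symmetric positive definite; since $g_2X_3=\nu_2X_3$ and $R^{\top}X_3=X_3$, $X_3$ is an eigenvector of $P$ if and only if it is an eigenvector of $Q$, i.e. $Q_{13}=Q_{23}=0$. Computing $M=\xi_2(b)\,\xi_1(c)$ explicitly gives
\[ Q_{13}=\sinh(b)\cosh(b)\left(\frac{1}{\la_1}+\frac{\sinh^{2}(c)}{\mu_1}+\frac{\cosh^{2}(c)}{\nu_1}\right),\qquad Q_{23}=\cosh(b)\sinh(c)\cosh(c)\left(\frac{1}{\mu_1}+\frac{1}{\nu_1}\right). \]
Since $\la_1,\mu_1,\nu_1>0$ and $\cosh\ge1$, both parenthesised factors are strictly positive and $\cosh(b),\cosh(c)>0$; hence $Q_{13}=0$ forces $\sinh(b)=0$ and $Q_{23}=0$ forces $\sinh(c)=0$, i.e. $b=c=0$ and $\xi_2(b)=\xi_1(c)=\mathrm{Id}$. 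Then $\xi=\xi_3(a)$, so $\xi$ is harmonic if and only if $\xi_3(a)$ is harmonic; the converse implication is immediate. This proves the theorem.

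The only genuine difficulty is bookkeeping: the direct expansion of $\tau(\xi)$ for the triangular product is very heavy, so the point is to pass first to $P=\xi\xi^{*}$ and then to the symmetric positive definite $Q=Mg_1^{-1}M^{\top}$, after which $b=c=0$ falls out of two sign-definite scalar identities. (One could equally run a Maple computation of $\tau(\xi)$ and then carry out this elimination, but the structural argument above keeps the computation to a minimum.)
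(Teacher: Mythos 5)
Your argument is correct, and it follows a genuinely different route from the paper's proof. The paper expands $\tau(\xi)$ for the full product explicitly (a Maple computation), writing its components as positive multiples of $(\mu_2+\nu_2)R$, $(\lambda_2+\nu_2)S$ and of a third quantity $Q$, and then eliminates the angle $a$ by hand via the identities for $\cos(a)R-\sin(a)S$ and $\sin(a)R+\cos(a)S$, whose right-hand sides are $-\cosh(b)\sinh(c)\cosh(c)\,\lambda_1(\mu_1+\nu_1)$ and $\bigl(\lambda_1(\mu_1+\nu_1)\cosh^2(c)+\nu_1(\mu_1-\lambda_1)\bigr)\cosh(b)\sinh(b)$; sign-definiteness of these factors forces $\sinh(b)=\sinh(c)=0$, after which only the $\xi_3(a)$ condition survives. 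You never expand $\tau(\xi)$: using the unimodularity trace formula $\langle\tau(\xi),u\rangle_2=\tr(\ad_u\circ\xi\xi^*)$ and the $\prs_2$-self-adjointness of $P=\xi\xi^*$ you turn the vanishing of the $X_1$- and $X_2$-components into ``$X_3$ is an eigenvector of $P$'', and the factorisation $\xi=\xi_3(a)M$, with $\xi_3(a)$ orthogonal and fixing $X_3$, reduces this to $Q_{13}=Q_{23}=0$ for $Q=Mg_1^{-1}M^{\top}$. Your formulas for the three trace components (computed from $[X_1,X_2]=-X_3$, $[X_2,X_3]=X_1$, $[X_3,X_1]=X_2$) and for $Q_{13}$, $Q_{23}$ check out, the two bracketed factors are indeed strictly positive, and the conclusion once $b=c=0$ is immediate in both directions, so the proof is complete. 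In effect both proofs hinge on the same positivity phenomenon after removing the rotation factor, but yours replaces the heavy explicit computation and the ad hoc rotation identities by structural facts about the Gram operator $P=\xi\xi^*$; an added benefit is that your component formulas transfer almost verbatim (with the obvious sign changes) to the $\mathfrak{su}(2)$ case, whereas the paper's $R$, $S$, $Q$ are specific outputs of its computer computation.
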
	
		\begin{proof} We have
			\[\begin{cases}\di \tau(\xi)=\frac{(\mu_2+\nu_2)R}{\lambda_{{2}}\lambda_{{1}}\mu_{{1}}\nu_{{1}}}X_1+
			\frac{(\la_2+\nu_2)S}{\mu_{{2}}\lambda_{{1}}\mu_{{1}}\nu_{{1}}}X_2+\frac{(\la_2-\nu_2)Q}{\nu_{{2}}\lambda_{{1}}\mu_{{1}}\nu_{{1}}}X_3,\\
		\di	R=\cosh \left( b \right)  \left( \sinh \left( b \right) \lambda_{{1}}
			\sin \left( a \right)  \left( \mu_{{1}}+\nu_{{1}} \right)  \left( 
			\cosh \left( c \right)  \right) ^{2}-\sinh \left( c \right) \lambda_{{
					1}}\cos \left( a \right)  \left( \mu_{{1}}+\nu_{{1}} \right) \cosh
			\left( c \right) -\sinh \left( b \right) \nu_{{1}}\sin \left( a
			\right)  \left( \lambda_{{1}}-\mu_{{1}} \right)  \right),\\ 
		\di	S=\cosh \left( b \right)  \left( \sinh \left( b \right) \lambda_{{1}}
			\cos \left( a \right)  \left( \mu_{{1}}+\nu_{{1}} \right)  \left( 
			\cosh \left( c \right)  \right) ^{2}+\sinh \left( c \right) \lambda_{{
					1}}\sin \left( a \right)  \left( \mu_{{1}}+\nu_{{1}} \right) \cosh
			\left( c \right) -\sinh \left( b \right) \nu_{{1}}\cos \left( a
			\right)  \left( \lambda_{{1}}-\mu_{{1}} \right)  \right) ,\\
		\di	Q=-2\,\cosh \left( c \right) \sinh \left( b \right) \sinh \left( c
		\right) \lambda_{{1}} \left( \mu_{{1}}+\nu_{{1}} \right)  \left( \cos
		\left( a \right)  \right) ^{2}+\cosh \left( c
		\right) \sinh \left( b \right) \sinh \left( c \right) \lambda_{{1}}
		\left( \mu_{{1}}+\nu_{{1}} \right)\\
		\di+\sin \left( a \right)  \left( \lambda_
		{{1}} \left(  \left( \cosh \left( b \right)  \right) ^{2}-2 \right) 
		\left( \mu_{{1}}+\nu_{{1}} \right)  \left( \cosh \left( c \right) 
		\right) ^{2}-\nu_{{1}} \left( \lambda_{{1}}-\mu_{{1}} \right) 
		\left( \cosh \left( b \right)  \right) ^{2}+\lambda_{{1}} \left( \mu_
		{{1}}+\nu_{{1}} \right)  \right) \cos \left( a \right) .
			\end{cases} \]On the other hand, one can show easily
			\[ \begin{cases}
			\cos(a)R-\sin(a)S=-\cosh \left( b \right) \sinh \left( c \right) \cosh \left( c \right) 
			\lambda_{{1}} \left( \mu_{{1}}+\nu_{{1}} \right),\\ 
			\sin(a)R+\cos(a)S=\left( \lambda_{{1}} \left( \mu_{{1}}+\nu_{{1}} \right)  \left( \cosh
			\left( c \right)  \right) ^{2}+\nu_{{1}} \left( \mu_{{1}}-\lambda_{{1
				}} \right)  \right) \cosh \left( b \right) \sinh \left( b \right). 
					\end{cases} \]
			So $\xi$ is harmonic if and only if
			\[ \sinh(b)=\sinh(c)=Q=0  \]and we get the desired result.
			\end{proof}

\end{document}